\newtheorem{theorem}{Theorem}
\newtheorem{proposition}[theorem]{Proposition}
\newtheorem{corollary}[theorem]{Corollary}
\theoremstyle{definition}
\newtheorem{example}[theorem]{Example}
\newcommand{\dir}{_{\rm dir}}
\newcommand{\poi}{_{\rm poi}}
\newcommand{\R}{\mathbb{R}}
\newcommand{\transpose}{{\raisebox{\depth}{\footnotesize $\intercal$}}} 
\DeclareMathOperator{\conv}{conv}
\DeclareMathOperator{\cone}{cone}
\DeclareMathOperator{\recc}{0^+\!}
\DeclareMathOperator{\vertex}{vert}
\DeclareMathOperator*{\argmin}{argmin}
\DeclareMathOperator{\Min}{Min}
\DeclareMathOperator{\inter}{int}
\DeclareMathOperator{\dom}{dom}
\DeclareMathOperator{\epi}{epi}
\DeclareMathOperator{\bd}{bd}
\newcommand*{\colonequals}{\mathrel{\vcenter{\baselineskip0.5ex%
      \lineskiplimit0pt\hbox{\scriptsize.}\hbox{\scriptsize.}}}=}
\renewcommand{\P}{\mathcal{P}}
\renewcommand{\O}{\mathcal{O}}
\newcommand{\extR}{\bar{\R}}
\providecommand{\bt}{{bensolve tools}}
\providecommand{\baron}{BARON}
\providecommand{\octave}{{\sc Gnu Octave}}
\begin{document}
\title{A vector linear programming approach for certain global optimization problems\textsuperscript{1}}

\author{Daniel Ciripoi\textsuperscript{2}
  \and
  Andreas Löhne\textsuperscript{2}
  \and
  Benjamin Weißing\textsuperscript{2}
}

\footnotetext[1]{This research was supported by the German Research Foundation (DFG) grant number LO--1379/7--1.}
\footnotetext[2]{Friedrich Schiller University Jena, Department of
    Mathematics, 07737 Jena, Germany,
    [daniel.ciripoi\textbar andreas.loehne\textbar benjamin.weissing]@uni-jena.de
}
\maketitle

\begin{abstract}
Global optimization problems with a quasi-concave objective function and linear constraints are studied.
We point out that various other classes of global optimization problems can be expressed in this way.
We present two algorithms, which can be seen as slight modifications of Benson-type
algorithms for multiple objective linear programs (MOLP).
The modification of the MOLP algorithms results in a more efficient treatment
 of the studied optimization problems.
This paper generalizes results of Schulz and Mittal \cite{mittal_schulz} on quasi-concave problems and
Shao and Ehrgott \cite{linear_multiplicative} on multiplicative linear programs. Furthermore,
it improves results of L\"ohne and Wagner \cite{dc_paper} 
on minimizing the difference $f=g-h$ of two convex functions $g$, $h$ where
either $g$ or $h$ is polyhedral.
Numerical examples are given and the results are compared with
the global optimization software \baron\  \cite{baron}, \cite{baron_2}.

\medskip
\noindent
{\bf Keywords:} global optimization, DC programming, multiobjective linear programming, linear vector optimization
\medskip

\noindent
{\bf MSC 2010 Classification:} 90C26, 90C29, 52B55
\end{abstract}

\section{Introduction}

The object of study is a global optimization problem with a
quasi-concave objective function
$f \colon \mathbb{R}^q \to \extR$ and
linear constraints of the form
\begin{equation*}\label{p}
	\min f(Px) \quad  \text{s.t.} \quad Ax \geq b\text{,} \tag{QCP}
\end{equation*}
where $P \in \mathbb{R}^{q \times n}$, $A \in \mathbb{R}^{m \times n}$
and $b \in \mathbb{R}^m$.  The symbol
\(\extR\colonequals \R\cup\{\,\pm\infty\,\}\) denotes the set of
extended reals.  The following four examples show classes of global
optimization problems which are covered by \eqref{p}.

\begin{example}[DC programming - ``convex component'' being polyhedral]\label{ex01} 
	Consider
	\begin{equation} \label{dc1}
		\min_{x \in \dom g} [g(x) - h(x)]
	\end{equation}
	where $g: \mathbb{R}^n \to \mathbb{R}\cup \{+\infty\}$ is polyhedral convex and $h: \mathbb{R}^n \to \mathbb{R}\cup \{+\infty\}$ is convex.
	The reformulation
	\begin{equation*}
		\min_{x,\, r} \left[r - h(x)\right] \quad\text{s.t.}\quad (x,r) \in \epi g
	\end{equation*}
	has a concave objective function.  As \(g\) is polyhedral, the
        constraints are linear.
        See, e.g.\ \cite{dc_paper} for more details.
\end{example} 

\begin{example}[DC programming - ``concave component'' being polyhedral]\label{ex02} 
  Consider the DC program \eqref{dc1} but, in contrast to Example
  \ref{ex01}, let $g$ be convex and $h$ be polyhedral convex and proper.  As
  shown in \cite{dc_paper}, the Toland-Singer dual problem
  \begin{equation}\label{eq_tsd}
    \min_{y \in \dom h^*} [h^*(y) - g^*(y)]
  \end{equation}
  can be utilized to solve \eqref{dc1}, where
  $g^*: \R^n \to \R\cup \{+\infty\}$ and
  $h^*: \R^n \to \R\cup \{+\infty\}$ are the conjugates of $g$ and
  $h$, respectively.  Since $h^*$ is polyhedral convex and $g^*$ is
  convex, we can proceed as in Example \ref{ex01} to obtain a
  reformulation as \eqref{p}.
\end{example} 

\begin{example}[Minimizing a convex function over the boundary of a polytope]\label{ex03}
	Let $g: \mathbb{R}^q \to \mathbb{R}\cup \{+\infty\}$ be convex and let
	\begin{equation*}
		Q = \left\{ x \in \mathbb{R}^q \,\middle\vert\,\exists u \in \R^k\colon A x + B u \geq b\right\}
	\end{equation*}
        be a polytope.  We consider the problem
	\begin{equation} \label{p_ex03_a}
		\min g(x) \quad \text{s.t.} \quad x \in \bd Q\text{,}
	\end{equation}
	where $\bd Q$ denotes the boundary of $Q$.\par
	We assume $0 \in \inter Q$ and that $g$ is Lipschitz over $Q$.
	For some real parameter $c > 0$ we define $h_c: \R^q \to \R\cup\{+\infty\}$ by its epigraph:
	\begin{equation*}
		\epi h_c = \left\{(x,r)  \in \R^q \times \R\,\middle\vert\,
		\exists u \in \R^k\colon
		A x + B u - \frac{1}{c}\, b\, r \geq 0,\; r \geq 0\right\}\text{.}
	\end{equation*}
        Because $\epi h_c$ is equal to the cone generated by the set $Q
        \times \{c\}$,  the negative of the polyhedral convex function $h(x) = h_c(x)
        - c$ penalizes points belonging to the interior of $Q$.  Thus, for $c$
        chosen sufficiently large, \eqref{p_ex03_a} can be replaced by
        the equivalent problem
	\begin{equation} \label{p_ex03_b}
		\min g(x) - (h_c(x) - c) \quad \text{s.t.} \quad x \in Q\text{.}
	\end{equation}
	Problem \eqref{p_ex03_b} is a DC program as considered and transformed into \eqref{p} in Example \ref{ex02}. Note that $g$ needs to be modified by setting $g(x) = \infty$ for $x \not \in Q$. For more details see Section \ref{subsec:bd} below.
\end{example} 

\begin{example}[Linear multiplicative programming \cite{linear_multiplicative}]\label{ex04} 
  A special instance of \eqref{p} is to minimize the product of affine
  functions under linear constraints:
  \begin{equation}\label{eq:mmp}
    \min \prod_{i=1}^q (c_i^{\transpose} x + d_i) \quad \text{ s.t. } \quad A x \geq b\text{.}
  \end{equation}
  Here, we assume $c_i^\intercal x + d_i > 0$ for feasible $x$.
  Various applications of this problem class can be found in the
  literature, see e.g.\ \cite{linear_multiplicative}.
\end{example}\par

Our approach to solve \eqref{p} can be summarized as follows: We show
that solving \eqref{p} is equivalent to solve
\[
  \min_{y\in\vertex\P} f(y)\text{,}
\]
where \(\vertex\cdot\) denotes the vertex set of a polyhedron, and
\begin{equation*}
  \P = \left\{\, y \in \R^q \middle |\,   y - P x \in C,\; Ax \geq b\,\right\}
\end{equation*}
denotes the {\em upper image} of the {\em
  vector linear program}
\begin{equation}\label{eq:vlp}
  \min\nolimits_C Px \quad \text{ s.t. } \quad Ax \geq b \text{.} \tag{VLP}
\end{equation}
Here, $C$ is some polyhedral convex pointed cone with respect to which $f$ is
monotone, that is $y-x \in C$ implies $f(x) \leq f(y)$.  A vector
linear program describes the minimization of the linear function $Px$
under the constraints $Ax \geq b$ with respect to the partial ordering
induced by $C$.  For further information and applications compare, for
example, \cite{benson_type}.  Algorithms designed for solving vector
linear programs, in particular Benson-type algorithms
\cite{benson_type}, also compute the set $\vertex\P$.
Thus, those algorithms could be utilized directly to solve \eqref{p}.
However, computing all the vertices of $\mathcal P$ may be too
expensive in practice.  Therefore we alter the algorithms for solving
\eqref{eq:vlp} slightly by introducing certain bounding techniques,
which are introduced in \cite{linear_multiplicative} for the special
case of linear multiplicative programming \eqref{eq:mmp}.  These
bounding techniques usually lead to a decline in the number of
vertices of $\P$ that need to be computed.

An overview over various solution techniques for global optimization
problems can be found in \cite{global_optimization,handbook}.
Quasi-concave minimization problems have been investigated, for
instance, in \cite{rank_two,majthay}.  The idea to solve a (scalar)
global optimization problem via a multiple objective linear program
(MOLP) (which we understand to be a vector linear program with the
special cone $C=\R^q_+$) is not new in the literature.  F{\"u}l{\"o}p
\cite{Fulop93} shows that a linear bilevel programming problem can be
solved by optimizing a linear function over the Pareto set of a MOLP.
Mittal and Schulz \cite{mittal_schulz} minimize a quasi-concave
objective function under linear constraints via a corresponding MOLP.
Shao and Ehrgott \cite{linear_multiplicative} investigate the special
case of multiplicative linear programs using this idea. L\"ohne and
Wagner \cite{dc_paper} solve DC optimization problems with one
polyhedral component (compare Examples \ref{ex01} and \ref{ex02}) by
utilizing a MOLP solver.

The article is organized as follows. In Section~\ref{sec_prel} we introduce some concepts and notation.
The problem formulation and corresponding concepts and results are given in Section~\ref{sec_approach}.
Section~\ref{sec_prim} is devoted to a first algorithm, which we call
the {\em primal algorithm} as it is a modification of (the primal
version of) Benson's algorithm \cite{benson, benson_type} for vector linear programs.
Section~\ref{sec_dual} deals with the {\em dual algorithm}, which is a modification of the dual variant of Benson's algorithm for VLP \cite{dual_variant, benson_type}.
We also recall some facts about {\em geometric duality} for vector linear programs \cite{geom_duality}.
In Section~\ref{nonsolid_cones}, our methods are extended to the case
of non-solid cones, which requires a problem reformulation in order to
be able to use these methods for VLPs.
The last section provides numerical examples.

\section{Preliminaries} \label{sec_prel}

A {\em polyhedral convex set} or {\em convex polyhedron} is defined to
be the solution set of a system of finitely many affine inequalities.
Since all polyhedral sets in this article are convex, we will say {\em
  polyhedral set} or {\em polyhedron} for short.  If a polyhedron is given as in the latter definition, we speak about an {\em H-representation} of the polyhedron.
The well-known Minkowski-Weyl theorem states that every nonempty
polyhedron $K \subseteq \R^q$ can be represented as a {\em generalized
  convex hull} of finitely many points $\{\,v^1,\dots,v^r\,\} \subseteq \R^q$, $r \geq 1$ and finitely many directions $\{\,d^1,\dots,d^s\,\}\subseteq \R^q$, $s \geq 0$, that is,
\begin{equation*}
	K = \left\{\sum_{i=1}^r \lambda_i v^i + \sum_{j=1}^s \mu_j d^j \,\middle\vert\,\lambda_i \geq 0\; (i=1,\dots,r),\; \mu_j \geq 0\; (j=1,\dots,s),\; \sum_{i=1}^r \lambda_i = 1\right\}\text{.}
\end{equation*}
The pair $(K\poi,K\dir)$ consisting of the two sets $K\poi \colonequals  \{\,v^1,\dots,v^r\,\}$ and $K\dir \colonequals  \{\,d^1,\dots,d^r\,\}$ is called {\em V-representation} of $K$.
We also write
\begin{equation*}
K=\conv K\poi + \cone K\dir\text{,}
\end{equation*}
where $\conv\cdot$ denotes the convex, and $\cone\cdot$ the conical
hull of a set.  We assume that $K\poi$ is nonempty and define
$\cone \emptyset \colonequals \{0\}$ as $K\dir$ is allowed to be
empty.\par
A polyhedron $K$ can be expressed as
\begin{equation*}
	K = \left\{x \in \R^q \,\middle\vert\,  \exists u \in
          \R^k\colon A x + B u \geq b\right\}\text{,}
\end{equation*}
where $A \in \R^{m\times q}$, $B \in \R^{m \times k}$ and
$b \in \R^m$.  This type of representation is referred to as {\em
  projection-} or {\em P-representation}, as $K$ is the projection of
the polyhedron
$Q = \left\{(x,u) \in \R^q \times \R^k \,\middle\vert\, A x + B u \geq
  b \right\}$ onto $\R^q$.\par
A {\em multiple objective linear program} (MOLP) is an optimization
problem of the form
\begin{equation}\label{eq:molp}
\min Px \quad \text{ s.t. } \quad Ax \geq b \tag{MOLP}\text{,}
\end{equation}
where $P \in \mathbb{R}^{q\times n}$, $A \in \mathbb{R}^{m\times n}$ and $b \in \mathbb{R}^m$.
Typically we have at least two linear objective functions, i.e.\ 
$q\geq 2$.  The operator $\min\cdot$ in \eqref{eq:molp} is to be
understood with respect to the component-wise partial ordering in
$\mathbb{R}^q$: $y \leq z$ if and only if $z-y \in \mathbb{R}^q_+ \colonequals 
\{ \, w \in \mathbb{R}^q \mid  w_1\geq 0,\dots, w_q \geq 0\,\}$.
If the cone $\mathbb{R}^q_+$ is replaced by a general
polyhedral convex pointed cone $C
\subseteq \mathbb{R}^q$, we obtain a {\em vector linear program}
(VLP):
\begin{equation}\label{vlp}
	\min\nolimits_C Px \quad \text{s.t.} \quad Ax \geq b \tag{VLP}\text{.}
\end{equation}
For a polyhedral convex pointed cone $C \subseteq \mathbb{R}^q$ there
exist matrices $Y \in \mathbb{R}^{q \times o}$ and $Z \in\mathbb{R}^{q
  \times p}$, $o,p \in \mathbb{N}$, such that 
\begin{equation}\label{def_C}
	C=\left\{\, Y \lambda \,\middle\vert\,  \lambda \in \mathbb{R}^o_+
  \,\right\}=\left\{\, y \in \mathbb{R}^q \,\middle\vert\,   Z^\transpose y \geq 0 \,\right\} \text{.}
\end{equation}
The equivalence $x \leq_C y \iff Z^\transpose x \leq Z^\transpose y$
follows.  Elements of
$S\colonequals \left\{\, x \in \mathbb{R}^n \mid Ax \geq b \,\right\}$
are called {\em feasible points}.  Elements of
$0^+ S \colonequals \left\{\, x \in \mathbb{R}^n \mid Ax \geq
  0\,\right\}$, the recession cone of $S$, are {\em feasible
  directions}.  By
\(P[S]\colonequals \left\{Px\,\middle\vert\, x\in S\right\}\) we
denote the image of \(S\) under \(P\).  The polyhedron
$\P \colonequals P[S]+C$ is known as {\em upper image} of \eqref{vlp}.
\par

We call a point $y \in \R^q$ a {\em minimal point} of the polyhedron
$P \subseteq \R^q$ if there is no $z \in P$ with $z \leq_C y$ and
$z \neq y$.  The set of minimal points of $P$ is denoted by
$\Min_C P$.  A vector $x \in S$ is called {\em minimizer} of
\eqref{vlp} if $Px \in \Min_C P[S]$.  A feasible direction
$x \in 0^+S$ is called {\em minimizer} of \eqref{vlp} if
$Px \in \Min_C P[0^+ S]\setminus\{\,0\,\}$.  Let $S\poi \subseteq S$
and $S\dir \subseteq 0^+ S$ with \(P[S\dir]\cap\{\,0\,\}=\emptyset\),
be finite sets.  We call $(S\poi,S\dir)$ a {\em finite infimizer} of
\eqref{vlp} if
\begin{equation*}
	\conv P[S\poi]+\cone P[S\dir]+C = \P\text{.}
\end{equation*}
A finite infimizer consisting of minimizers only is called a {\em solution} of \eqref{vlp}, see \cite{buch_andreas, benson_type}.

Finally we recall two types of scalarizations for \eqref{vlp}.
For a $w \in \mathbb{R}^q$, the {\em  weighted sum scalarization} is
\begin{equation}\label{scl:p1}\tag{P\textsubscript{1}(\(w\))}
  \min w^\intercal Px \quad \text{s.t.}\quad Ax \geq b\text{.}
\end{equation}
The corresponding dual problem is
\begin{equation}\label{scl:d1}\tag{D\textsubscript{1}(\(w\))}
  \max b^\intercal u \quad \text{s.t.}\; \left\{
    \begin{aligned}
      A^\transpose u &= P^{\transpose} w\text{,}\\
      u &\geq 0\text{.}
    \end{aligned}
  \right.
\end{equation}
Another relevant scalarization is the {\em translative scalarization} (or {\em scalarization by a reference variable}) for some $t \in \mathbb{R}^q$:  
\begin{equation}\label{scl:p2}\tag{P\textsubscript{2}(\(t\))} 
  \min z \quad \text{s.t.}\quad
  \left\{
    \begin{aligned} Ax &\geq b\text{,}\\
      Z^\transpose Px &\leq Z^\transpose t+z \cdot Z^{\transpose} c\text{.}
    \end{aligned}
  \right.
\end{equation} 
Note that the second inequality is equivalent to $Px \leq_C t+z \cdot c$. The purpose of this scalarization method is depicted in Proposition \ref{prop_scal2}.
The corresponding dual problem of \eqref{scl:p2} (in a slightly modified form, see \cite{benson_type} for details) is 
\begin{equation}\label{scl:d2}\tag{D\textsubscript{2}(\(t\))}
  \max b^\intercal u - t^\intercal w \quad \text{s.t.} \quad
  \left\{
    \begin{aligned}
      A^{\transpose} u &=P ^{\transpose} w\text{,}\\
      c^\intercal w&=1\text{,}\\
      Y^{\transpose}w &\geq 0\text{,}\\
      u &\geq 0\text{.}
    \end{aligned}
  \right.
\end{equation}

A function $f \colon \R^q \to \extR$ is said to be {\em
  quasi-concave} if its {\em super level sets} $U_r \colonequals \{\, x \in \R^q
\mid  f(x) \geq r\,\}$ are convex for every $r \in \mathbb{R}$.
Equivalently \cite[Section 3.4]{convex_opt}, $f$ is quasi-concave if
and only if
\begin{equation*}
 \forall \lambda \in (0,1),\; \forall x,y \in \R^q\colon\; f(\lambda x +
  (1-\lambda)y) \geq \min \{ \, f(x),f(y)\,\}\text{.}
\end{equation*}\par
Let $C \subseteq\R^q$ be a pointed (i.e.\ $C \cap (-C) =
\{\,0\,\}$) convex cone.  As usual, we write $x \leq_{C} y$ if $y-x \in
C$.  A function $g \colon \R^q \to \extR$ is said to be {\em
  $C$-monotone} on a set \(D\subseteq\R^q\) if for all
$x,y \in D$, $x \leq_{C} y$ implies $g(x) \leq g(y)$.\par
A function $f \colon \R^q \to \extR$  is called {\em polyhedral convex (polyhedral)}
if its { \em epigraph } $\epi f \colonequals 
\left\{ (x,r) \in  \R^n \times \R \; | \; f(x) \leq r  \right\}$ is a polyhedral
convex set. The {\em domain} of $f$ is defined as 
$\dom f \colonequals \left\{ x \in \R^n \; | \; f(x) < + \infty \right\}$.\par
The {\em conjugate} $f^* \colon \R^q \rightarrow \R \cup \left\{ + \infty \right\}$ of $f$ with $\dom f \neq \varnothing$ is defined as
\begin{equation*}
f^*(x^*)=\sup_{x \in \dom f} \left\{ x^\intercal x^* -f(x) \right\} \, .
\end{equation*}

\section{Problem formulation} \label{sec_approach}

The optimization problem we intend to solve is 
\begin{equation}\label{eq:qcp}\tag{QCP}
  \min f(Px) \quad  \text{s.t.} \quad Ax \geq b \text{,}
\end{equation}
where \(f \colon \R^q \to \extR\) is a quasi-concave function,
$P \in \R^{q \times n}$, $A \in \R^{m \times n}$ are matrices and
$b \in \R^m$ is a vector.  In typical applications one has
\(q \ll n\).  A {\em low rank of non-linearity} (see e.g.\
\cite{rank_two}) is indicated by the projection of the $n$-dimensional
feasible polyhedron $S\colonequals \{x \in \R^n | \; Ax \geq b\}$ onto
the ``low''-dimensional polyhedron \(P[S]\subseteq\R^q\).  In other
words, the problem is non-linear with respect to only \(q\) instead of
\(n\) variables.  The problem can be solved if $q$ is not ``too
large'' (say up to $q=20$).  The low rank property can arise from
modeling techniques, e.g.\ by introducing slack variables, or by
auxiliary variables which are inserted in order to transform
polyhedral convex terms (such as finite maximum or absolute value)
into linear constraints, see e.g.\ \cite{dc_paper} for an example from
location analysis.

We assume that $C\subseteq \R^q$ is a polyhedral convex pointed cone such that:
\makeatletter
\tagsleft@true
\makeatother
\begin{flalign}
  \label{assmpt:M}\tag{M}%
  \hphantom{\mathrm{(M)}}\;&\text{The objective function \(f\) is \(C\)-monotone on the set \(P[S]-C\).}&\\[.8ex]
  \label{assmpt:B}\tag{B}%
  &C \supseteq \recc P[S]\text{, i.e.\ \(P[S]\) is {\em \(C\)-bounded}.}
\end{flalign}
\makeatletter
\tagsleft@false
\makeatother

Note that assumption \eqref{assmpt:M} is always satisfied for the cone
$C=\{0\}$.  There are three reasons for a larger (with respect to set
inclusion) cone $C$.  First, if one is able to find a cone with
$\inter C \neq \emptyset$, a direct application of modified VLP
algorithms is possible, while in the case of $\inter C = \emptyset$ a
reformulation of the problem is necessary, which is discussed in
Section \ref{nonsolid_cones}.  Secondly, a larger cone $C$ tends to
reduce the number of iteration steps required.  Third, a larger cone
$C$ can be necessary to satisfy the boundedness assumption
\eqref{assmpt:B}.\par
A further assumption is made in the algorithms following subsequently:
We assume that an H-representation of an initial outer approximation
\(\O\) is available as input.  In order to ensure that \(\O\)
possesses a vertex, we additionally require \(\O\) to be
\(C\)-bounded.  Thus, the assumption reads as follows:
\makeatletter \tagsleft@true \makeatother
\begin{flalign}\label{assmpt:O}\tag{O}%
  \hphantom{\mathrm{(O)}}\;&\text{An H-representation of a polyhedron \(\O\), with \(\O\supseteq\P\) and
    \(\recc\O = C\), is given.}&
\end{flalign}
\makeatletter \tagsleft@true \makeatother
An initial approximation
according to \eqref{assmpt:O} can be computed whenever assumption
\eqref{assmpt:B} holds.  Appropriate techniques for constructing
\(\O\) can be found in, e.g., \cite{benson_type}.

We next show existence of an optimal solution of \eqref{p} and its
attainment in a vertex of $\P$.

\begin{proposition}\label{prop_exist_minimal}
  Let the assumptions \eqref{assmpt:M} and \eqref{assmpt:B} be
  satisfied.  Let \(\O\) denote a polyhedron according to
  \eqref{assmpt:O}.  Then
  \begin{equation*}
    \min_{y \in \vertex\O} f(y) \leq \inf_{x \in S} f(Px)\text{.}
  \end{equation*}
\end{proposition}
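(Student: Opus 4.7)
The plan is to argue that every image point $Px$ in $\P$ dominates (in the $\leq_C$ sense) some convex combination of vertices of $\O$, so that the quasi-concavity of $f$ together with $C$-monotonicity forces the infimum of $f$ over $P[S]$ to be at least the minimum of $f$ over $\vertex\O$.

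First I would note that $\vertex\O$ is nonempty: because $\recc\O=C$ and $C$ is pointed, the polyhedron $\O$ contains no affine line, hence it admits vertices. Now fix an arbitrary $x\in S$. Since $Px\in P[S]\subseteq\P\subseteq\O$, the Minkowski--Weyl theorem applied to $\O$ yields a representation
\begin{equation*}
  Px \;=\; \sum_{i=1}^{r} \lambda_i v^{i} \;+\; d\text{,}\qquad v^{i}\in\vertex\O,\;\; \lambda_i\geq 0,\;\; \sum_{i=1}^{r}\lambda_i=1,\;\; d\in\recc\O=C\text{.}
\end{equation*}
Set $\bar y \colonequals \sum_{i=1}^{r}\lambda_i v^{i} = Px - d$. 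Then $\bar y\leq_C Px$ by construction.

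The next step is to invoke the monotonicity assumption \eqref{assmpt:M}. Both $Px$ (which lies in $P[S]$) and $\bar y = Px-d$ with $d\in C$ belong to $P[S]-C$, so $\bar y\leq_C Px$ combined with \eqref{assmpt:M} gives $f(\bar y)\leq f(Px)$. Quasi-concavity of $f$ applied to the convex combination $\bar y=\sum_i\lambda_i v^{i}$ yields
\begin{equation*}
  \min_{y\in\vertex\O} f(y) \;\leq\; \min_{1\leq i\leq r} f(v^{i}) \;\leq\; f(\bar y) \;\leq\; f(Px)\text{.}
\end{equation*}
Since $x\in S$ was arbitrary, taking the infimum over $x\in S$ on the right yields the claim.

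I do not anticipate a serious obstacle: the only subtlety is verifying that both points at which $C$-monotonicity is applied actually lie in the set $P[S]-C$ on which \eqref{assmpt:M} is assumed, but this is immediate once $\bar y$ is written as $Px-d$ with $d\in C$. Implicit use of \eqref{assmpt:B} enters only insofar as it guarantees the existence of an $\O$ satisfying \eqref{assmpt:O} in the first place; within the proof itself all the work is done by Minkowski--Weyl and the two defining properties of $f$.
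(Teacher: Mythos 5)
Your proof is correct and follows essentially the same route as the paper's: decompose $Px \in \O$ via Minkowski--Weyl into a convex combination of vertices of $\O$ plus a recession direction $d \in C$, apply \eqref{assmpt:M} to pass from $Px$ down to $Px-d$, and finish with quasi-concavity. Your added check that both points lie in $P[S]-C$ is a welcome bit of extra care that the paper's proof leaves implicit.
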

\begin{proof}
  As \(C\) is supposed to be pointed, \(\vertex\O\neq\emptyset\)
  follows from \(\recc\O = C\).  Let \(\vertex\O = \{y^1,\ldots,y^k\}\).
  For any \(x\in S\) there exist \(\lambda_j\in \left[0,1\right]\), \(j=1,\ldots,k\); and
  a direction \(c\in C\) such that
  \[
    Px = \sum_{j=1}^k\lambda_jy^j + c\text{.}
  \]
  From \eqref{assmpt:M} follows \(f(Px)\geqslant f(Px - c)\geqslant
  f\left(\sum_{j=1}^k\lambda_j y^j\right)\).  As \(f\) is supposed to be quasi-concave, this leads to
  \[
    f(Px)\geqslant \min_{j=1,\ldots,k}f(y^j)\text{,}
  \]
  which proves the claim.
\end{proof}
Note that in the preceding proof we need \(C\)-monotonicity of \(f\)
on the set \(\O\cap\left(P[S]-C\right)\) only.  If we are given an
objective function \(f^\prime\) and an outer approximation \(\O\)
according to \eqref{assmpt:O} such that \(f^\prime\) is \(C\)-monotone
on \(\O\cap\left(P[S]-C\right)\), we can transform \(f^\prime\) in the
following way in order to obtain a quasi-concave function \(f\) that
complies with \eqref{assmpt:M}:
\[
  f(y)\colonequals
  \begin{cases}
    f^\prime(y)& \text{if \(y\in\O\),}\\
    -\infty&\text{otherwise.}
  \end{cases}
\]

\begin{corollary}
	Let the assumptions \eqref{assmpt:M} and \eqref{assmpt:B} be satisfied. Then \eqref{p} has an optimal solution $x^* \in S$ such that
	\begin{equation*}
		f(Px^*) = \min_{y \in \vertex\P} f(y)\text{.}
	\end{equation*}	
\end{corollary}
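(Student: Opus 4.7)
The plan is to apply Proposition \ref{prop_exist_minimal} directly with the specific choice \(\O = \P\), and then establish the matching reverse inequality by hand. Before invoking the proposition I would verify that \(\P\) itself meets assumption \eqref{assmpt:O}: trivially \(\P\supseteq\P\), and from \(\P = P[S] + C\) one computes \(\recc\P = \recc P[S] + C = C\), where the last equality uses \(C\)-boundedness \eqref{assmpt:B}. The proposition then yields
\[
  \min_{y\in\vertex\P}f(y)\;\leq\;\inf_{x\in S}f(Px)\text{.}
\]

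For the reverse inequality I would argue directly from the definition of \(\P\). Every vertex \(y\in\vertex\P\) lies in \(P[S]+C\), so there exist \(x\in S\) and \(c\in C\) with \(y = Px + c\), i.e.\ \(Px\leq_C y\). Both \(Px\) and \(y\) lie in \(P[S]-C\) (note \(y = Px + c \in P[S] + C\), but \(y - c = Px\), and by \eqref{assmpt:M} we need only that \(Px\leq_C y\) with the two points in the monotonicity domain — taking \(y\) to be any such vertex, we can equivalently use the observation that \(Px\in P[S]\subseteq P[S]-C\) and apply \eqref{assmpt:M} together with the form of the argument in the proof of the proposition). This gives \(f(y)\geq f(Px)\geq \inf_{x'\in S}f(Px')\), and taking the minimum over the finite vertex set \(\vertex\P\) produces the reverse inequality, hence equality.

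Finally, since \(\vertex\P\) is finite, the minimum on the left is attained at some \(y^*\in\vertex\P\), which as above has a representation \(y^* = Px^* + c^*\) with \(x^*\in S\), \(c^*\in C\). Then
\[
  f(Px^*)\;\leq\; f(y^*)\;=\;\min_{y\in\vertex\P}f(y)\;=\;\inf_{x\in S}f(Px)\;\leq\; f(Px^*)\text{,}
\]
so equality holds throughout, \(x^*\) is an optimal solution, and \(f(Px^*) = \min_{y\in\vertex\P}f(y)\) as claimed.

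There is no real obstacle: the argument is essentially a sandwich estimate. The only point that requires a modicum of care is ensuring that the proposition is applicable to \(\O = \P\) (which reduces to computing \(\recc\P\)), and correctly tracking the direction of monotonicity when comparing a vertex \(y\) of \(\P\) with its associated preimage \(Px\in S\).
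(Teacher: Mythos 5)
Your overall strategy (a sandwich estimate built on Proposition \ref{prop_exist_minimal} applied to $\O=\P$) is the same as the paper's, and your verification that $\recc\P=\recc P[S]+C=C$ under \eqref{assmpt:B} is a worthwhile detail the paper leaves implicit. However, there is a genuine gap in your reverse inequality, and it recurs in the final attainment step: you write a vertex as $y=Px+c$ with $x\in S$, $c\in C$, and deduce $f(Px)\leq f(y)$ from assumption \eqref{assmpt:M}. But \eqref{assmpt:M} only asserts $C$-monotonicity of $f$ on the set $P[S]-C$, and a point $y\in P[S]+C$ need not belong to $P[S]-C$: if, say, $P[S]=\{0\}$ and $C=\R^q_+$, then $P[S]+C=\R^q_+$ while $P[S]-C=-\R^q_+$, and these meet only at the origin. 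Your parenthetical acknowledges that both points of the pair must lie in the monotonicity domain, verifies this for $Px$, but never establishes it for $y$; the appeal to ``the form of the argument in the proof of the proposition'' does not help, because there the larger point of each compared pair lies in $P[S]$ or in $\conv\vertex\O$, not in $P[S]+C$.

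The missing idea --- and the one the paper actually uses --- is that every vertex of $\P=P[S]+C$ already lies in $P[S]$: if $y=Px+c$ with $c\in C\setminus\{0\}$, then $y-c=Px\in\P$ and $y+c=Px+2c\in\P$, so $y$ is the midpoint of two distinct points of $\P$ and cannot be a vertex (this is precisely the argument appearing in the proof of Theorem \ref{thm:primal_mod}). Hence $c=0$ and $y=Px$ for some $x\in S$, so $f(y)=f(Px)\geq\inf_{x'\in S}f(Px')$ with no appeal to monotonicity at all, and likewise the minimizing vertex $y^*$ equals $Px^*$ exactly rather than up to a cone element. With that observation inserted, your sandwich argument closes and coincides with the paper's proof; without it, the monotonicity step as written does not go through.
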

\begin{proof}
  By applying Proposition~\ref{prop_exist_minimal} to the set
  $\O = \P$, we know that $\min_{y \in \vertex\P} f(y)$ is a lower
  bound for the optimal value of \eqref{p}.  There are only finitely
  many vertices $y$ of $\P$, each of which can be expressed as
  $y = Px$ for some $x \in S$.  Thus, the lower bound is attained by
  some $x^* \in S$ and $x^*$ is an optimal solution for \eqref{p}.
\end{proof}

\section{Primal algorithm for QCP} \label{sec_prim}

We begin this section by recalling some facts about Benson's algorithm for vector linear programs following the exposition of \cite{benson_type}.
A modified variant of the algorithm is then developed (the results of \cite{linear_multiplicative} are generalized) to solve the quasi-concave scalar optimization problem \eqref{p}. 

\subsection{Primal Benson-type algorithm for VLP}

Benson's algorithm can briefly be described as a procedure computing a
shrinking sequence of outer approximating polyhedra
$\O_j = \{ y \in \R^q \mid B^j y \geq c^j\}$ for $\P$, that is,
\begin{equation*}
  \O_0  \supsetneq \O_1 \supsetneq \dots \supsetneq \O_j  \supsetneq \dots  \supsetneq \O_k = \P\text{.}
\end{equation*}

The procedure is started with $\O_0 \colonequals  \O$ from assumption \eqref{assmpt:O}.
By solving the linear program \eqref{scl:p2} parametrized by an arbitrary vertex $t$ of the current outer approximation $\O_j$, we obtain a boundary point $v$ of $\P$.
An optimal solution of the dual problem \eqref{scl:d2} yields a half-space $\mathcal{H}_j$ supporting $\P$ in $v$, that is, $\mathcal{H}_j \supseteq \P$ and $v \in \P \cap -\mathcal{H}_j$.
The refinement of outer approximations is based on setting
\begin{equation}\label{update_step}
	\O_{j+1} \colonequals  \O_j \cap \mathcal{H}_j\text{.}
\end{equation}
The algorithm terminates when all vertices of $\O_j$ belong to $\P$. 
Benson's algorithm can be seen as a {\em cutting plane method}.
The algorithm as presented in \cite{benson_type} requires the cone $C$ to be solid, i.e. $\inter C \neq \emptyset$.
The following proposition summarizes the role of scalarizations for the algorithm.

\begin{proposition}[{\cite[Proposition
    4.2]{benson_type}}] \label{prop_scal2} Let $S\neq \emptyset$ and
  let assumption \eqref{assmpt:B} be satisfied.  Furthermore, assume
  \(C\) to be solid and let $c \in \inter C$.  Let an H-representation
  of \(C\) be given by
  $C = \{ y \in \mathbb{R}^q \mid Z^{\transpose} y \geq 0\}$.  Then,
  for every $t \in \R^q$, there exist optimal solutions
  $(\bar{x},\bar{z})$ to \eqref{scl:p2} and $(\bar{u},\bar{w})$ to
  \eqref{scl:d2}.  Each solution $(\bar{u},\bar{w})$ to \eqref{scl:d2} defines
  a half-space
  $\mathcal{H}\colonequals \left\{ y \in \R^q \middle |\,
    \bar{w}^\intercal y \geq b^\intercal \bar{u} \right\} \supseteq
  \P$ such that
  $s\colonequals t+c\cdot \bar{z} \in \P \cap -\mathcal{H}$.
  Furthermore, one has
  \begin{equation*}
    t \notin \P \iff \bar{z}>0\text{.}
  \end{equation*}
\end{proposition}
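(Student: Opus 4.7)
The plan is to reduce everything to linear programming duality between \eqref{scl:p2} and \eqref{scl:d2}. Once both programs are shown to be feasible, classical strong duality yields optimal solutions with matching objective values, from which the geometric statements follow by direct computation.

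Primal feasibility is straightforward: pick any $\bar x \in S$ and observe that since $c \in \inter C$, for all sufficiently large $z$ the vector $t + zc - P\bar x$ lies in $C$, making $(\bar x, z)$ feasible for \eqref{scl:p2}. For dual feasibility the key is to exploit assumption~\eqref{assmpt:B}. By Farkas' lemma, the constraints $A^\transpose u = P^\transpose w$, $u \geq 0$ admit a solution in $u$ precisely when $w^\transpose P r \geq 0$ for every $r \in \recc S$, i.e., when $w^\transpose y \geq 0$ for all $y \in \recc P[S]$. Assumption \eqref{assmpt:B} gives $\recc P[S] \subseteq C$, so any $w$ in the dual cone $\{w : Y^\transpose w \geq 0\}$ works. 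Pointedness of $C$ ensures this dual cone is non-trivial, and for $c \in \inter C$ every nonzero $w$ in it satisfies $c^\transpose w > 0$, so the normalization $c^\transpose w = 1$ is admissible. Weak duality then bounds the primal from below, both problems attain a common optimum, and strong duality gives $\bar z = b^\transpose \bar u - t^\transpose \bar w$.

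Next I would verify $\mathcal{H} \supseteq \P$. For $y = Px + c' \in \P$ with $x \in S$, $c' \in C$,
\[
  \bar w^\transpose y = \bar w^\transpose P x + \bar w^\transpose c' = \bar u^\transpose A x + \bar w^\transpose c' \geq b^\transpose \bar u,
\]
using $A^\transpose \bar u = P^\transpose \bar w$, $\bar u \geq 0$, $Ax \geq b$ and $\bar w$ in the dual cone. For the point $s = t + \bar z c$, primal feasibility of $(\bar x, \bar z)$ yields $s - P \bar x \in C$, hence $s \in P[S] + C = \P$; meanwhile, using $c^\transpose \bar w = 1$ and strong duality, $\bar w^\transpose s = \bar w^\transpose t + \bar z = \bar w^\transpose t + (b^\transpose \bar u - t^\transpose \bar w) = b^\transpose \bar u$, so $s$ sits on the bounding hyperplane, confirming $s \in \P \cap -\mathcal{H}$.

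The final equivalence splits into two short arguments. If $\bar z > 0$, then $\bar w^\transpose t = b^\transpose \bar u - \bar z < b^\transpose \bar u$, placing $t$ strictly outside $\mathcal{H}$ and hence outside $\P$. If $\bar z \leq 0$, then $t = s + (-\bar z) c$ lies in $\P + C = \P$, since $(-\bar z) c \in C$ and $C = \recc \P$. I expect the dual feasibility step to be the main obstacle; once \eqref{assmpt:B} is combined with Farkas' lemma, the remaining content is essentially bookkeeping around the LP duality pair.
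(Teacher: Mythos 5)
Your argument is correct. The paper itself gives no proof of this proposition (it is quoted from \cite[Proposition 4.2]{benson_type}), and your route --- feasibility of both scalar programs (primal via $c\in\inter C$, dual via Farkas plus assumption (B) and the fact that $c^\intercal w>0$ for $0\neq w\in C^+$), then LP strong duality and direct computation for the half-space, the point $s$, and the equivalence $t\notin\P\iff\bar z>0$ --- is exactly the standard argument behind the cited result; the only step you leave implicit is that \eqref{scl:d2} coincides with the true LP dual of \eqref{scl:p2} under the substitution $w=Z\lambda$, using $C^+=\{\,Z\lambda\mid\lambda\geq 0\,\}$, which is the ``slight modification'' the paper alludes to.
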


Assumption \eqref{assmpt:O} ensures that an H-representation of the
initial outer polyhedral approximation $\O=\O_0$ of $\P$ is given and
by \eqref{update_step} we obtain iteratively H-representations of all
subsequent outer approximations $\O_j$.  It is necessary to compute
(or to update) a $V$-representation of $\O_j$.  This step is called
{\em vertex enumeration}.  Algorithm \ref{alg_primal_benson} is a
simplified and slightly improved version of the primal Benson
algorithm as formulated in \cite{benson_type}.  In contrast to
\cite[Algorithm 1]{benson_type} we do not store the ``pre-image
information'', i.e.\ $x$ and $(u,w)$.  Moreover, for simplicity, we
store the H-representation of $\O$ directly instead of using duality
theory.  We enhance \cite[Algorithm 1]{benson_type} as we do not
re-initialize the set $T$, and thus avoid solving the same linear
program twice.  The operation $\texttt{solve}(\cdot)$ returns optimal
solutions of a given pair of dual linear programs.

\begin{algorithm}[ht]
  \caption{Simplified version of Benson's algorithm for bounded (i.e.\ assumption \eqref{assmpt:B} holds) \eqref{vlp}, compare \cite[Algorithm 1]{benson_type}.}
  \label{alg_primal_benson}
  \DontPrintSemicolon
  \KwIn{\\Data $A,\; b,\; P,\; Z,\; Y$ (problem data), $c \in \inter C$, $\O$ according to assumption \eqref{assmpt:O}.
  }
  \KwOut{\\ V-representation ($\O\poi,\{\text{columns of Y}\}$) of $\P$ \\ H-representation $\O$ of $\P$}
  \Begin{
    $T \leftarrow \emptyset$ \;
    compute the set $\O\poi$ of vertices of $\O$\;
    \Repeat{$\O\poi \setminus T = \emptyset$}
    { 	
      choose $t \in \O\poi \setminus T$ \;
      $(x,z,u,w)\leftarrow \texttt{solve}(\text{\eqref{scl:p2}}/\text{\eqref{scl:d2}})$\; \label{alg:alg_1_lp}
      \eIf{$z>0$}
      {
        $\O \leftarrow \O \cap \{ y \in \R^q \mid  w^{\transpose} y \geq b^{\transpose} u \}$\;
        update the set $\O\poi$ of vertices of $\O$\;
      }
      {
        $T \leftarrow T \cup \{t\}$ \;
      }
    }
  } 
\end{algorithm}

\begin{theorem}[{see \cite[Theorem 4.5]{benson_type}}]
  Let $S\neq \emptyset$, denote by $C$ a polyhedral convex solid pointed cone
  as defined in \eqref{def_C} which satisfies assumption
  \eqref{assmpt:B}.  Then Algorithm \ref{alg_primal_benson} is correct
  and finite.
\end{theorem}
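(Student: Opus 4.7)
The plan is to split the claim into correctness and finiteness, both of which closely mirror the reasoning for \cite[Theorem~4.5]{benson_type}; the minor modifications made in Algorithm~\ref{alg_primal_benson} (direct storage of the H-representation, retention of $T$ across iterations, omission of preimage information) do not affect the core argument, so the task is really to verify that the modifications are compatible.

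For correctness, I would first establish the loop invariant $\O\supseteq\P$. It holds initially by assumption~\eqref{assmpt:O}, and Proposition~\ref{prop_scal2} guarantees that every added cut $\{y\mid w^\intercal y\geq b^\intercal u\}$ is a half-space containing $\P$, so the invariant is preserved under the update~\eqref{update_step}. On termination one has $\O\poi\subseteq T$; each $t\in T$ was added with scalarization value $\bar z\leq 0$, which by Proposition~\ref{prop_scal2} forces $t\in\P$. Combining $\O\poi\subseteq\P$ with the equality $\recc\O=C=\recc\P$ (the latter using assumption~\eqref{assmpt:B} together with $\P=P[S]+C$) and the Minkowski--Weyl theorem yields $\O\subseteq\P$, hence $\O=\P$; consequently both representations returned by the algorithm describe $\P$.

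For finiteness, the key observation is that every cut is determined by an optimal basic feasible solution of \eqref{scl:d2}; only finitely many such solutions exist, and each defines a supporting hyperplane of a proper face of $\P$. I would then argue that no cut $\mathcal H$ can be added twice: once $\O\subseteq\mathcal H$, no subsequent vertex of $\O$ can violate $\mathcal H$, so no later iteration can reproduce it. Hence the total number of cuts is bounded by the number of facets of $\P$. Between successive cuts, $\O$ is fixed with finitely many vertices, and $T$ grows by exactly one element per iteration without ever shrinking, which bounds the number of non-cut iterations.

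The step I expect to be the main obstacle is the bookkeeping argument that $T\subseteq\O\poi$ is preserved across cuts: when $\O$ is replaced by $\O\cap\mathcal H$, an already-processed $t\in T$ must remain a vertex rather than cease to be extreme. This should follow from the fact that $t\in\P\subseteq\mathcal H$ already satisfies the new inequality, so the system of active constraints at $t$ in the old $\O$ retains full rank in the new polyhedron; but this is the detail one has to be careful about when adapting the argument from \cite{benson_type}, especially given that $T$ is never re-initialised here. Once this is in place, the remainder of the verification is routine.
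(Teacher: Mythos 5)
The paper offers no proof of this theorem at all --- it defers entirely to \cite[Theorem 4.5]{benson_type} --- so there is nothing in-paper to compare against; your reconstruction follows exactly the standard route (invariant $\O\supseteq\P$ preserved by the cuts of Proposition~\ref{prop_scal2}, termination giving $\vertex\O\subseteq\P$, Minkowski--Weyl with $\recc\O=C$ to conclude $\O=\P$, and finiteness from the finitely many possible cuts plus the growth of $T$ between cuts) and is sound. Two minor remarks: the bound ``number of facets of $\P$'' should rather be ``number of basic feasible solutions of \eqref{scl:d2}'' (whose feasible set does not depend on $t$), since a supporting hyperplane need not expose a facet and the argument in any case presupposes that $\texttt{solve}$ returns basic optimal solutions; and the step you single out as the main obstacle is immediate, because $t\in T$ implies $t\in\P\subseteq\mathcal{H}_j$, and an extreme point of $\O_j$ that lies in $\O_{j+1}\subseteq\O_j$ is automatically an extreme point of $\O_{j+1}$ --- no rank argument on active constraints is needed.
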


\subsection{Modified primal Benson-type algorithm for QCP} \label{sec_mod_prim}

We already know that an optimal solution of \eqref{p} can be found if in Algorithm \ref{alg_primal_benson} the vector $x$ with the smallest value $f(Px)$ is stored.
Algorithm \ref{alg_primal_modification} is just a modification and simplification of Algorithm \ref{alg_primal_benson}, which also yields an optimal solution of \eqref{p}.
In general, Algorithm \ref{alg_primal_modification} requires less iteration steps.

\begin{algorithm}[ht]
\DontPrintSemicolon
\SetKwFor{loop}{loop}{}{end}
\KwIn{\\Data $A,\; b,\; P,\; Y,\; Z,\; f$ (problem data), $c \in \inter C$, $\O$ according to assumption \eqref{assmpt:O}}
\KwOut{\\Optimal solution $x$ of (QCP)}
\Begin{
compute the set $\O\poi$ of vertices of $\O$\;
\loop{}
{ 
	choose $t \in \text{argmin} \left\lbrace f(t) \mid  t \in \O\poi \right\rbrace$ \;
	$(x,z,u,w)\leftarrow \texttt{solve}(\text{\eqref{scl:p2}}/\text{\eqref{scl:d2}})$\;
	\eIf{$z > 0$}
	{
		$\O \leftarrow \O \cap \{ y \in \R^q \mid  w^{\transpose} y \geq b^{\transpose} u \}$\;
		update the set $\O\poi$ of vertices of $\O$\;
	}
	{
		break
	}
}

}
\caption{\label{alg_primal_modification}%
  Modified version of Benson's algorithm to solve \eqref{p},
  generalization of \cite[Algorithm 3.2]{linear_multiplicative}}
\end{algorithm}

\begin{theorem} \label{thm:primal_mod}
  Let $S\neq \emptyset$, $f\colon\R^q\to \R$ quasi-concave.  Let $C$
  be a polyhedral convex solid pointed cone according to \eqref{def_C} which
  satisfies assumptions \eqref{assmpt:M} and \eqref{assmpt:B}.  Then
  Algorithm \ref{alg_primal_modification} is correct and finite.
\end{theorem}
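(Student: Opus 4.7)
My plan is to split the claim into correctness and finiteness, both of which follow by combining Proposition \ref{prop_exist_minimal} with Proposition \ref{prop_scal2} and the finiteness of the unmodified Benson procedure.

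For correctness, I examine the state at the moment the algorithm executes \texttt{break}. At that moment, a vertex $t \in \vertex\O$ has been chosen with
\[
  f(t) = \min_{y \in \vertex\O} f(y),
\]
the linear program has returned $(x,z,u,w)$ with $z \leq 0$, and the current $\O$ still satisfies $\O \supseteq \P$ together with $\recc\O = C$ (this invariant is preserved by every update step, since each added half-space supports $\P$). By Proposition \ref{prop_scal2}, $z \leq 0$ implies $t \in \P$, and feasibility of $(x,z)$ in \eqref{scl:p2} gives $Px \leq_C t + zc$. Since $c \in C$ and $-zc \in C$, this yields $Px \leq_C t$. Assumption \eqref{assmpt:M} then gives $f(Px) \leq f(t)$. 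Applying Proposition \ref{prop_exist_minimal} to the current $\O$ produces
\[
  f(t) = \min_{y \in \vertex\O} f(y) \leq \inf_{x' \in S} f(Px').
\]
Combining the two inequalities and using $x \in S$ shows that $x$ is an optimal solution of \eqref{p}.

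For finiteness, I will argue that each non-breaking iteration strictly shrinks $\O$ and that only finitely many distinct cuts can be added. If $z > 0$, then $t \notin \P$. Using the dual constraint $c^\intercal w = 1$ from \eqref{scl:d2}, the supporting half-space $\mathcal{H} = \{y \mid w^\intercal y \geq b^\intercal u\}$ provided by Proposition \ref{prop_scal2} contains the point $s = t + z c \in \P$ on its boundary, so
\[
  w^\intercal t = w^\intercal s - z\,(c^\intercal w) = b^\intercal u - z < b^\intercal u,
\]
showing $t \notin \mathcal{H}$. Hence $\O \cap \mathcal{H} \subsetneq \O$, and in particular $t$ is removed from the vertex set. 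Every cut produced in this way is a supporting half-space of $\P$, so it coincides with one that could arise during an execution of Algorithm \ref{alg_primal_benson}. Since that algorithm is finite by the previously cited theorem, only finitely many such half-spaces exist, and therefore only finitely many non-breaking iterations can occur before either (a) the modified algorithm breaks earlier, or (b) $\O$ is refined all the way to $\P$, in which case the next chosen vertex $t \in \vertex\P$ automatically satisfies $z \leq 0$ and triggers the break.

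The main obstacle I anticipate is justifying early termination: unlike the original Benson algorithm, here we stop as soon as the $f$-minimizing vertex happens to lie in $\P$, even though other vertices of $\O$ may still be outside $\P$. The resolution is exactly the observation above, that Proposition \ref{prop_exist_minimal} applies to \emph{any} outer approximation $\O$ satisfying \eqref{assmpt:O}, not just to $\P$ itself, so the lower bound $\min_{y \in \vertex\O} f(y) \leq \inf_S f\circ P$ is already available at the point of termination. Once this is in place, the rest of the argument is routine.
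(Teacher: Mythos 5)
Your overall strategy matches the paper's: apply Proposition \ref{prop_exist_minimal} to the \emph{current} outer approximation $\O$ (which still satisfies \eqref{assmpt:O}) to get the lower bound $f(t)\leq\inf_{x'\in S}f(Px')$ at termination, and inherit finiteness from Algorithm \ref{alg_primal_benson}. However, there is one genuine gap in your correctness argument: from $Px\leq_C t$ you conclude $f(Px)\leq f(t)$ by invoking \eqref{assmpt:M}, but \eqref{assmpt:M} only asserts $C$-monotonicity of $f$ on the set $P[S]-C$. You have $Px\in P[S]\subseteq P[S]-C$, but $t=Px+c'$ for some $c'\in C$ lies in $P[S]+C$, which need not be contained in $P[S]-C$; so the monotonicity hypothesis does not apply to the pair $(Px,t)$ as written. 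The paper sidesteps this entirely by showing that in fact $t=Px$: if $t-Px=c'\neq 0$, then $t-c'=Px\in\P\subseteq\O$ and $t+c'\in\O$ (since $\recc\O=C$), so $t$ would be the midpoint of a nondegenerate segment in $\O$, contradicting that $t$ is a vertex of $\O$. Hence $f(Px)=f(t)$ with no appeal to monotonicity. Your step can be repaired the same way (or by noting that $t$, being a vertex of $\O\supseteq\P$ lying in $\P$, is a vertex of $\P$ and hence belongs to $P[S]$), but as stated it does not follow from the hypotheses.

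A secondary remark on finiteness: your assertion that ``only finitely many such half-spaces exist'' because Algorithm \ref{alg_primal_benson} is finite is not a valid inference---finiteness of one run does not bound the number of distinct supporting half-spaces that \emph{could} arise. The clean way to put it (and essentially what the paper does) is that every execution of Algorithm \ref{alg_primal_modification} is an initial segment of an execution of Algorithm \ref{alg_primal_benson} with a particular vertex-selection rule, truncated at the first occurrence of $z=0$; since every such execution is finite and must encounter $z=0$ at least once before terminating, the truncated run is finite as well.
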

\begin{proof}
	The main difference between Algorithms \ref{alg_primal_benson} and \ref{alg_primal_modification} is that Algorithm  \ref{alg_primal_modification} terminates after $z$ equals zero for the first time. Since Algorithm \ref{alg_primal_benson} is finite and as it terminates only if the case $z=0$ occurred at least once, Algorithm \ref{alg_primal_modification} must be finite, too. Thus, it remains to show that $x$ is an optimal solution of \eqref{p}. By Proposition \ref{prop_exist_minimal} and taking into account that $\O\poi = \vertex \O$, we have
	\begin{equation}\label{eq_476}
		f(t) = \min\{\, f(y) \mid  y \in \O\poi\,\} \leq \inf\{\, f(P v) \mid  v \in S\,\}\text{.}
	\end{equation}
	At termination, $z=0$ in \eqref{scl:p2} implies $Px \leq_C t$. Assume that $Px \neq t$. Then there is some $c \in C\setminus\{0\}$ such that $t-c = Px \in \P \subseteq \O$. We also have $t+c \in \O$. This contradicts the fact that $t$ is a vertex of $\O$. Hence $t=Px$. We obtain $f(Px) = f(t)$, where $x$ is feasible for \eqref{p}. Together with \eqref{eq_476} we conclude that $x$ solves \eqref{p}. 
\end{proof}

\begin{example}\label{example_pa}
The following problem is a slight modification of the problem stated and solved in \cite[p.~256]{global_optimization} using polyhedral annexation methods:
\begin{align*}
\min \phantom{platz} &g(x)=- \lvert x_1 \rvert^{\frac{3}{2}} - \frac{1}{10}\left( x_1-0.5x_2+0.3x_3+x4-4.5 \right)^{2}\\[.2cm]
\text{s.t.}  \phantom{platz} &\begin{bmatrix}
1.2 & 1.4 & 0.4 & 0.8 \\
-0.7 & 0.8 & 0.8 & 0.0 \\
0.0 & 1.2 & 0.0 & 0.4 \\
2.8 & -2.1 & 0.5 & 0.0 \\
0.4 & 2.1 & -1.5 & -0.2 \\
-0.6 & -1.3 & 2.4 & 0.5
\end{bmatrix}
x\leq \begin{bmatrix}
6.8 \\ 0.8 \\ 2.1 \\ 1.2 \\ 1.4\\ 0.8
\end{bmatrix},\quad x \geq 0\text{.}\\
\end{align*}
We have $y=Px$ for the matrix 
\begin{align*}
	P=
	\begin{bmatrix}
	1 & 0 & 0& 0 \\
	1 & -0.5 & 0.3 & 1 \\
	\end{bmatrix}
\end{align*}
and the objective function turns into 
\begin{equation*}
	f(y)=-\lvert y_1 \rvert^{\frac{3}{2}}-\frac{1}{10}\left( y_2-4.5 \right)^{2}
\end{equation*}
with $g(x)=f(Px)$.
The feasible region and some level sets of the projected problem are depicted in Figure \ref{level_sets_ex}.
The iteration steps of Algorithm \ref{alg_primal_modification} are shown in Figure \ref{depiction_ex_pa}.
We obtain the optimal value $-2.494$ attained at $(1.084,0.804)$.
\end{example}
\bigskip

\begin{figure}[ht]
\center
\begin{tikzpicture}
	\node[anchor=south west,inner sep=0] (image) at (0,0) {\includegraphics[scale=0.3,trim=1cm 7cm 1cm 8cm]{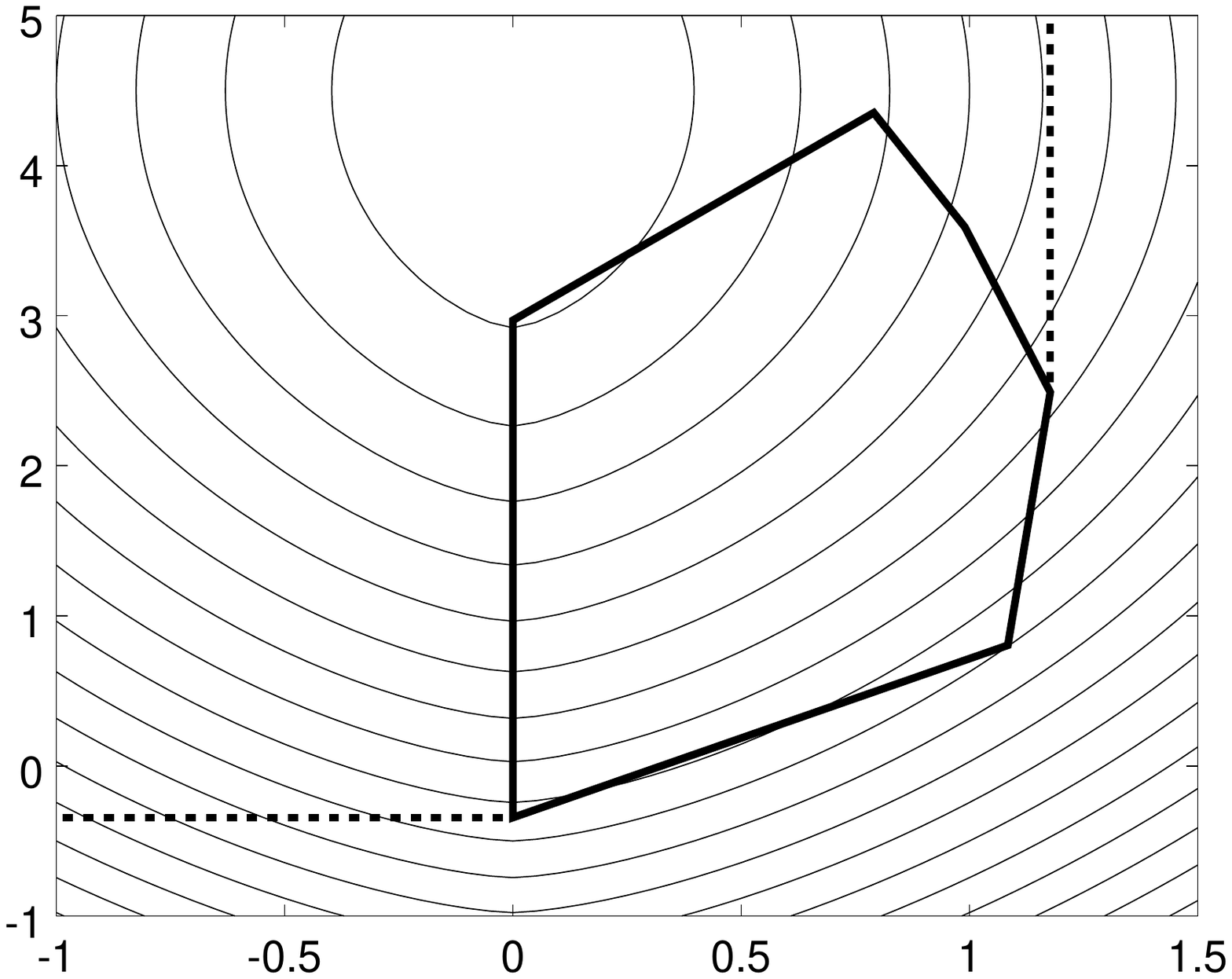}};
	\begin{scope}[
		x={(image.south east)},
		y={(image.north west)}
		]
		\node [black, font=\bfseries] at (0.5,-0.05) {$y_1$};
		\node [black, font=\bfseries] at (0.05,0.5) {$y_2$};
	\end{scope}
\end{tikzpicture}
\caption{\label{level_sets_ex}%
  $P[S]$ and level sets of $f$ for Example \ref{example_pa}. The cone
  $C$ generated by $(-1,0)^\intercal$ and $(0,1)^\intercal$ is
  indicated by the dashed lines. It apparently reflects the
  objective's monotonicity within the feasible region.}
\end{figure}

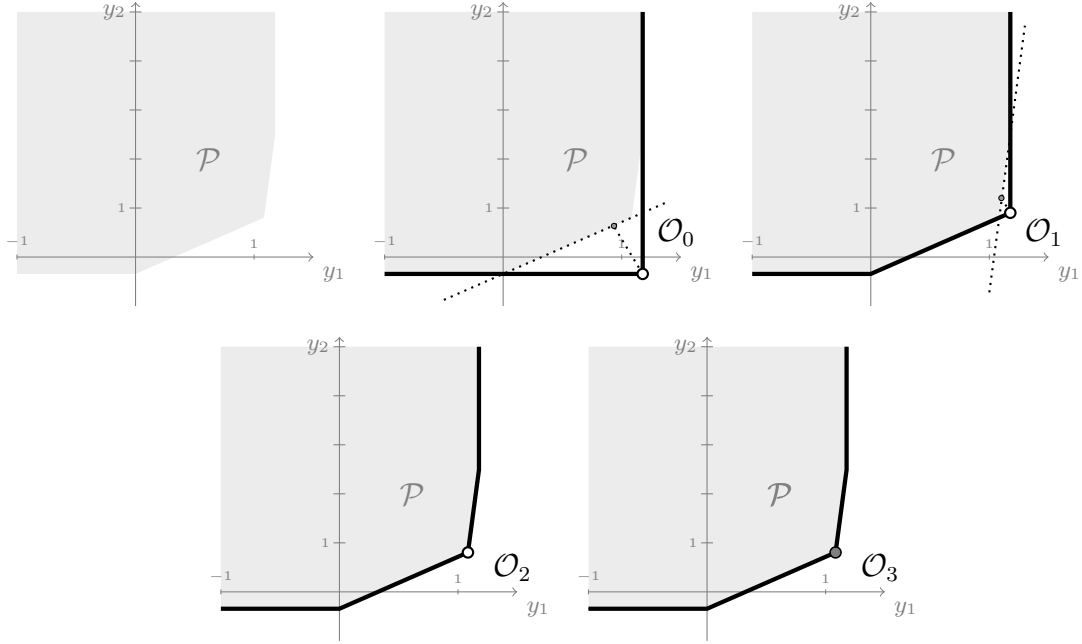
\begin{figure}[ht]
\begin{tikzpicture}[xscale=2.4,scale=0.65]
\path [fill,gray!15!white,ultra thick] (-1,-0.3444) -- (0,-0.3444)--(1.084, 0.804)--(1.177,2.49)--(1.177,5)--(-1,5);
\draw [->,gray] (0,-1) -- (0,5.2) ;
\draw [->,gray] (-1,0) -- (1.5,0) ;
\draw [gray] (-0.05,1)-- (0.05,1);
\node [left,gray] at (0,1) {\tiny $1$};
\node [left,gray] at (0.8,2) {\large $\P$};
\draw [gray] (-0.05,2)-- (0.05,2);
\draw [gray] (-0.05,3)-- (0.05,3);
\draw [gray] (-0.05,4)-- (0.05,4);
\draw [gray] (-0.05,5)-- (0.05,5);
\draw [gray] (-1,-0.05)-- (-1,0.05);
\node [above,gray] at (-1,0) {\tiny $-1$};
\draw [gray] (1,-0.05)-- (1,0.05);
\node [above,gray] at (1,0) {\tiny $1$};
\node [left,gray] at (0,5) {\small $y_2$};
\node [below right,gray] at (1.5,0) {\small $y_1$};
\end{tikzpicture}
\begin{tikzpicture}[xscale=2.4,scale=0.65]
\path [fill,gray!15!white,ultra thick] (-1,-0.3444) -- (0,-0.3444)--(1.084, 0.804)--(1.177,2.49)--(1.177,5)--(-1,5);
\draw [->,gray] (0,-1) -- (0,5.2) ;
\draw [->,gray] (-1,0) -- (1.5,0) ;
\draw [gray] (-0.05,1)-- (0.05,1);
\node [left,gray] at (0,1) {\tiny $1$};
\node [left,gray] at (0.8,2) {\large $\P$};
\draw [gray] (-0.05,2)-- (0.05,2);
\draw [gray] (-0.05,3)-- (0.05,3);
\draw [gray] (-0.05,4)-- (0.05,4);
\draw [gray] (-0.05,5)-- (0.05,5);
\draw [gray] (-1,-0.05)-- (-1,0.05);
\node [above,gray] at (-1,0) {\tiny $-1$};
\draw [gray] (1,-0.05)-- (1,0.05);
\node [above,gray] at (1,0) {\tiny $1$};
\node [left,gray] at (0,5) {\small $y_2$};
\node [below right,gray] at (1.5,0) {\small $y_1$};
\node [left] at (1.7,0.5) {\large $\O_0$};
\draw [dotted, thick, black] (1.177,-0.3444)--(0.927,0.656);
\draw [dotted, thick, black] (-0.5,-0.874)--(1.4,1.139);
\draw [fill=gray!80!white] (0.935,0.634) ellipse (0.625pt and 1.5pt);
\draw [ ultra thick] (-1,-0.3444)--(1.177,-0.3444) -- (1.177,5);
\draw [fill=white,thick] (1.177,-0.3444) ellipse  (1.25pt and 3pt);
\end{tikzpicture}
\begin{tikzpicture}[xscale=2.4,scale=0.65]
\path [fill,gray!15!white,ultra thick] (-1,-0.3444) -- (0,-0.3444)--(1.084, 0.804)--(1.177,2.49)--(1.177,5)--(-1,5);
\draw [->,gray] (0,-1) -- (0,5.2) ;
\draw [->,gray] (-1,0) -- (1.5,0) ;
\draw [gray] (-0.05,1)-- (0.05,1);
\node [left,gray] at (0,1) {\tiny $1$};
\node [left,gray] at (0.8,2) {\large $\P$};
\draw [gray] (-0.05,2)-- (0.05,2);
\draw [gray] (-0.05,3)-- (0.05,3);
\draw [gray] (-0.05,4)-- (0.05,4);
\draw [gray] (-0.05,5)-- (0.05,5);
\draw [gray] (-1,-0.05)-- (-1,0.05);
\node [above,gray] at (-1,0) {\tiny $-1$};
\draw [gray] (1,-0.05)-- (1,0.05);
\node [above,gray] at (1,0) {\tiny $1$};
\node [left,gray] at (0,5) {\small $y_2$};
\node [below right,gray] at (1.5,0) {\small $y_1$};
\node [left] at (1.7,0.5) {\large $\O_1$};
\draw [dotted, thick, black] (1.177,0.9025)--(1.102,1.2025);
\draw [dotted, thick, black] (1,-0.72)--(1.3,4.719);
\draw [fill=gray!80!white]  (1.102,1.2025) ellipse (0.625pt and 1.5pt);
\draw [ultra thick] (-1,-0.3444) -- (0,-0.3444) -- (1.177,0.9025)--(1.177,5);
\draw [thick,fill=white] (1.177,0.9025) ellipse (1.25pt and 3pt);
\end{tikzpicture}
\center
\begin{tikzpicture}[xscale=2.4,scale=0.65]
\path [fill,gray!15!white,ultra thick] (-1,-0.3444) -- (0,-0.3444)--(1.084, 0.804)--(1.177,2.49)--(1.177,5)--(-1,5);
\draw [->,gray] (0,-1) -- (0,5.2) ;
\draw [->,gray] (-1,0) -- (1.5,0) ;
\draw [gray] (-0.05,1)-- (0.05,1);
\node [left,gray] at (0,1) {\tiny $1$};
\node [left,gray] at (0.8,2) {\large $\P$};
\draw [gray] (-0.05,2)-- (0.05,2);
\draw [gray] (-0.05,3)-- (0.05,3);
\draw [gray] (-0.05,4)-- (0.05,4);
\draw [gray] (-0.05,5)-- (0.05,5);
\draw [gray] (-1,-0.05)-- (-1,0.05);
\node [above,gray] at (-1,0) {\tiny $-1$};
\draw [gray] (1,-0.05)-- (1,0.05);
\node [above,gray] at (1,0) {\tiny $1$};
\node [left,gray] at (0,5) {\small $y_2$};
\node [below right,gray] at (1.5,0) {\small $y_1$};
\node [left] at (1.7,0.5) {\large $\O_2$};
\draw [ultra thick] (-1,-0.3444) -- (0,-0.3444)--(1.084, 0.804)--(1.177,2.49)--(1.177,5);

\draw [fill=white,thick] (1.084, 0.804) ellipse (1.25pt and 3pt);

\end{tikzpicture}
\begin{tikzpicture}[xscale=2.4,scale=0.65]
\path [fill,gray!15!white,ultra thick] (-1,-0.3444) -- (0,-0.3444)--(1.084, 0.804)--(1.177,2.49)--(1.177,5)--(-1,5);
\draw [->,gray] (0,-1) -- (0,5.2) ;
\draw [->,gray] (-1,0) -- (1.5,0) ;
\draw [gray] (-0.05,1)-- (0.05,1);
\node [left,gray] at (0,1) {\tiny $1$};
\node [left,gray] at (0.8,2) {\large $\P$};
\draw [gray] (-0.05,2)-- (0.05,2);
\draw [gray] (-0.05,3)-- (0.05,3);
\draw [gray] (-0.05,4)-- (0.05,4);
\draw [gray] (-0.05,5)-- (0.05,5);
\draw [gray] (-1,-0.05)-- (-1,0.05);
\node [above,gray] at (-1,0) {\tiny $-1$};
\draw [gray] (1,-0.05)-- (1,0.05);
\node [above,gray] at (1,0) {\tiny $1$};
\node [left,gray] at (0,5) {\small $y_2$};
\node [below right,gray] at (1.5,0) {\small $y_1$};
\node [left,gray] at (0.8,2) {\large $\P$};
\node [left] at (1.7,0.5) {\large $\O_3$};
\draw [](1.177,5)--(1.177, 2.49)-- (1.084, 0.804)-- (0, -0.3444)-- (-1,-0.344);
\draw [ultra thick] (-1,-0.3444) -- (0,-0.3444)--(1.084, 0.804)--(1.177,2.49)--(1.177,5);
\draw [fill,thick] (1.084, 0.804) ellipse (1.25pt and 3pt);
\draw [gray,fill=gray] (1.084, 0.804) ellipse (1pt and 2.4pt);
\end{tikzpicture}
\caption{\label{depiction_ex_pa}%
  Iteration steps for Example \ref{example_pa}. The white circle shows
  the current point $t$, whereas the gray dots indicate the boundary
  points $s$ calculated by \eqref{scl:p2}. Note that we avoid two
  additional iteration steps in comparison to Algorithm
  \ref{alg_primal_benson} since there are two remaining vertices of
  the outer approximation, which do not need to be processed.}
\end{figure}

\section{Dual algorithm for QCP}\label{sec_dual}

In this section we propose a dual algorithm for the quasi-concave
problem \eqref{p}, which is related to the dual variant of Benson's
algorithm for vector linear programs, introduced in
\cite{dual_variant}.  In \cite{linear_multiplicative},
Shao and Ehrgott introduced a similar algorithm for
linear multiplicative programs, which is a special case of our
setting.  Here, we propose  a modification of
Shao and Ehrgott's algorithm, which turns out to 
yield better numerical results.\par

We start by recalling several facts about duality theory for vector
linear programs.  Afterwards we recapitulate the dual variant of
Benson's algorithm for vector linear programs, which we present in a
simplified form.  The final subsection deals with the dual algorithm
for quasi-concave programs.\par

\subsection{Geometric Duality for VLP} \label{sec_geometric_duality}

The dual problem associated with \eqref{vlp}, introduced in \cite{geom_duality} (see also \cite{buch_andreas,dual_variant}), is
\begin{equation}\label{vlpstar}
	\max\nolimits_K \; D^*(u,w) \quad \text{s.t.}\quad (u,w)\in T \tag{VLP$^*$}
\end{equation}
with objective function $D^* \colon \mathbb{R}^m \times \mathbb{R}^q
\to \mathbb{R}^q$ defined by
\begin{equation*}
  D^*(u,w)=\left( w_1,\ldots,w_{q-1},b^\intercal u  \right)^\transpose\text{,}
\end{equation*}
feasible set
\begin{equation*}
  T\colonequals  \left\{\, (u,w) \in  \mathbb{R}^m \times \mathbb{R}^q \middle |\,   u \geq 0 ,\; A^{\transpose} u=P^{\transpose}w, \; c^\intercal w=1, \; Y^{\transpose} w \geq 0 \,\right\}
\end{equation*}
and ordering cone
\begin{equation*}
	K\colonequals \mathbb{R}_+ \cdot (0,\ldots,0,1)^\transpose\text{.}
\end{equation*}
Throughout, we assume that
\begin{equation}\label{eq_c}
	c \in \text{int }C \quad \text{and} \quad c_q=1\text{.}
\end{equation}
Observe that this assumption does not constitute a restriction:  As
$\inter C \neq \emptyset$, it is always possible to chose $c \in
\inter C$ such that either $c_q=1$ or $c_q=-1$.  In the latter case,
an equivalent problem where $C$, $P$ and $c$ are replaced by $-C$,
$-P$ and $-c$, respectively, can be considered.\par

Similar to the upper image $\P$ for \eqref{vlp}, the lower image for \eqref{vlpstar} is defined as
\begin{equation*}
	\mathcal{D}^*\colonequals D[T]-K\text{.}
\end{equation*}
To express the duality relations, we make use of the following bi-affine {\em coupling function}:
\begin{equation} \label{coupling_func}
  \varphi \colon \mathbb{R}^q \times \mathbb{R}^q \rightarrow \mathbb{R},\quad \varphi(y,y^*)\colonequals \sum_{i=1}^{q-1} y_iy_i^* +y_q \left( 1- \sum_{i=1}^{q-1}c_i y_i^* \right)-y_q^*\text{.}
\end{equation}\

\begin{theorem}[weak duality \cite{geom_duality,buch_andreas}] \label{weak_duality}
  One has
  \begin{equation*}
    \left[\,y \in \P \land y^* \in \mathcal{D}^*\,\right] \implies \varphi(y,y^*)\geq 0\text{.}
  \end{equation*}
\end{theorem}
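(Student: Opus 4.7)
The plan is to unpack both defining sets and reduce the coupling function to a form where the feasibility constraints of \eqref{vlp} and \eqref{vlpstar} make nonnegativity manifest. The key observation exploited throughout is the normalization $c_q=1$ together with $c^\intercal w = 1$, which collapses the seemingly asymmetric last coordinate of $\varphi$ into a clean inner product.

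First I would write the elements of the two images in their ``canonical'' form. Any $y \in \P$ can be written as $y = Px + k$ with $x \in S$ and $k \in C$; using the $Y$-representation of $C$, $k = Y\mu$ for some $\mu \geq 0$. Any $y^* \in \mathcal{D}^*$ can be written as $y^* = D^*(u,w) - \lambda e_q$ for some $(u,w) \in T$ and $\lambda \geq 0$, where $e_q$ is the last standard basis vector. In particular $y^*_i = w_i$ for $i = 1, \dots, q-1$ and $y^*_q = b^\intercal u - \lambda$.

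Next I would plug these into \eqref{coupling_func}. The constraint $c^\intercal w = 1$ gives $1 - \sum_{i=1}^{q-1} c_i w_i = c_q w_q$, and together with $c_q = 1$ from \eqref{eq_c} this reduces $y_q\bigl(1-\sum_{i=1}^{q-1} c_i y^*_i\bigr) = y_q w_q$. After this simplification,
\begin{equation*}
  \varphi(y, y^*) \;=\; \sum_{i=1}^{q-1} y_i w_i + y_q w_q - b^\intercal u + \lambda \;=\; y^\intercal w - b^\intercal u + \lambda.
\end{equation*}
Now I substitute $y = Px + Y\mu$. Using $P^\intercal w = A^\intercal u$ (feasibility of $(u,w)$ in $T$), the first piece becomes $x^\intercal P^\intercal w = (Ax)^\intercal u$. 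Since $Ax \geq b$ and $u \geq 0$, we get $(Ax)^\intercal u \geq b^\intercal u$. The second piece is $\mu^\intercal Y^\intercal w \geq 0$ by $\mu \geq 0$ and $Y^\intercal w \geq 0$. Combining, $y^\intercal w \geq b^\intercal u$, so $\varphi(y,y^*) \geq \lambda \geq 0$.

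I do not anticipate any real obstacle: the statement is essentially a bookkeeping consequence of LP weak duality applied to \eqref{scl:p1}/\eqref{scl:d1}, once the last-coordinate convention in $D^*$ and the normalization $c_q = 1$ are handled. The only thing to double-check is that the displayed form of $\mathcal{D}^*$ (with the $-K$ summand) correctly accounts for the extra slack $\lambda$, so that the inequality is not accidentally tight to an equality we have not proved.
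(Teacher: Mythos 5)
Your argument is correct and complete: the decomposition $y=Px+Y\mu$, $y^*=D^*(u,w)-\lambda e_q$, the reduction of the last-coordinate term via $c^\intercal w=1$ and $c_q=1$ to get $\varphi(y,y^*)=y^\intercal w-b^\intercal u+\lambda$, and the final appeal to $A^\intercal u=P^\intercal w$, $Ax\geq b$, $u\geq 0$, $Y^\intercal w\geq 0$ is exactly the standard scalar-LP weak duality argument for \eqref{scl:p1}/\eqref{scl:d1}. The paper itself gives no proof (it cites \cite{geom_duality,buch_andreas}), and your derivation matches the one found in those references, so there is nothing to flag.
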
\

\begin{theorem}[strong duality \cite{geom_duality,buch_andreas}] \label{strong_duality}
  Let $S$ and $T$ be nonempty.
  Then
  \begin{align*}
    \left[\,\forall y^* \in \mathcal{D}^* \colon\varphi(y,y^*)\geq 0\,\right] &\implies y \in  \P \\
    \left [\,\forall y \in \P \colon \varphi(y,y^*)\geq 0\,\right] &\implies y^* \in  \mathcal{D}^*\text{.}
  \end{align*}
\end{theorem}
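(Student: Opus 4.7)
My plan is to prove both implications by contrapositive, combining convex separation with strong duality for the linear scalarisations \eqref{scl:p1}/\eqref{scl:d1}. As a preliminary step, I would observe that a point $y^{*}$ lies in $\mathcal{D}^{*}$ precisely when there exist $(u,w)\in T$ and $\eta\geq 0$ with $y_{i}^{*}=w_{i}$ for $i<q$ and $y_{q}^{*}=b^{\intercal}u-\eta$; the constraint $c^{\intercal}w=1$ together with $c_{q}=1$ forces $w_{q}=1-\sum_{i<q}c_{i}y_{i}^{*}$, so $w$ is already determined by $y^{*}$. With this $w$, the coupling simplifies to $\varphi(y,y^{*})=w^{\intercal}y-b^{\intercal}u+\eta$, an identity I would record once up front so both halves can reuse it.

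For the first implication I would suppose $y\notin\mathcal{P}$ and separate. Since $\mathcal{P}$ is a closed convex polyhedron with $C\subseteq\recc\mathcal{P}$, there is a normal vector $w$ satisfying $w^{\intercal}y<\alpha:=\inf_{z\in\mathcal{P}}w^{\intercal}z$; finiteness of $\alpha$ forces $w$ into the dual cone $\{\,w\mid Y^{\intercal}w\geq 0\,\}$, and since $c\in\inter C$ I may rescale to $c^{\intercal}w=1$. Applying LP strong duality to \eqref{scl:p1}/\eqref{scl:d1}, which is primal feasible because $S\neq\emptyset$ and primal bounded below by the separation, yields an optimal $u\geq 0$ with $A^{\intercal}u=P^{\intercal}w$ and $b^{\intercal}u=\alpha$. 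Hence $(u,w)\in T$ and $y^{*}:=D^{*}(u,w)\in\mathcal{D}^{*}$ satisfies $\varphi(y,y^{*})=w^{\intercal}y-\alpha<0$.

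For the second implication I would take $w$ determined by $y^{*}$ as above and split $y^{*}\notin\mathcal{D}^{*}$ into three cases: (i) $Y^{\intercal}w\not\geq 0$; (ii) the LP $\max\{\,b^{\intercal}u\mid A^{\intercal}u=P^{\intercal}w,\,u\geq 0\,\}$ is infeasible; or (iii) this LP is feasible with optimal value $\beta<y_{q}^{*}$. In (i), choose $c'\in C$ with $w^{\intercal}c'<0$ and any $y_{0}\in\mathcal{P}$; then $y_{0}+\lambda c'\in\mathcal{P}$ drives $\varphi$ to $-\infty$. In (ii), LP duality together with $S\neq\emptyset$ makes \eqref{scl:p1} unbounded below, producing $y=Px\in\mathcal{P}$ with $\varphi(y,y^{*})<0$. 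In (iii), LP strong duality delivers $x\in S$ with $w^{\intercal}Px=\beta$, and $\varphi(Px,y^{*})=\beta-y_{q}^{*}<0$.

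The hard part will be the careful bookkeeping that links separation to LP attainment — in particular, verifying that \eqref{scl:p1}/\eqref{scl:d1} attains on the side one needs in each subcase, and checking that the three cases above are genuinely exhaustive for $y^{*}\notin\mathcal{D}^{*}$. This is where the assumptions $S,T\neq\emptyset$ and the normalisation $c^{\intercal}w=1$ do the real work; once they are in place, strong duality is assembled from the cases.
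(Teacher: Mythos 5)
The paper does not prove this theorem itself: it is quoted from the cited references \cite{geom_duality,buch_andreas} (geometric duality of Heyde--L\"ohne), so there is no in-paper argument to compare against. Judged on its own merits, your proposal is correct and follows the standard route of those references: reduce both implications, via the identity $\varphi(y,y^*)=\omega(y^*)^\intercal y-y_q^*$, to statements about the weighted-sum scalarisation \eqref{scl:p1}/\eqref{scl:d1}, and then invoke polyhedral separation and LP strong duality. The individual steps all check out: in the first implication, finiteness of $\inf_{z\in\P}w^\intercal z$ does force $w\in C^+\setminus\{0\}$ (via $C\subseteq\recc\P$), and $c\in\inter C$ then gives $c^\intercal w>0$, which justifies the normalisation; in the second implication, $c^\intercal\omega(y^*)=1$ holds automatically because $c_q=1$, and your three cases are exhaustive because, once $\omega(y^*)\in C^+$ and \eqref{scl:d1} is feasible, primal feasibility ($S\neq\emptyset$) bounds the dual above so its maximum $\beta$ is attained and $y^*\in\mathcal{D}^*$ is equivalent to $y_q^*\leq\beta$. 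Two small remarks: your argument for the first implication never actually uses $T\neq\emptyset$ (it produces an element of $T$ whenever $\P\neq\R^q$), which is harmless; and to turn the sketch into a full proof you should write out the one-line verification that strict separation of $y$ from the nonempty polyhedron $\P$ is preserved under the positive rescaling of $w$. Neither point is a gap.
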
\

Using the coupling function $\varphi$ we define half-space-valued functions
\begin{align*}
  \mathcal{H}^* &\colon \mathbb{R}^q \rightrightarrows \mathbb{R}^q &\mathcal{H}^*(y) &\colonequals \left\{ y^* \in \mathbb{R}^q \middle |\,   \varphi(y,y^*) \geq 0 \right\} \\
  \mathcal{H} &\colon \mathbb{R}^q \rightrightarrows \mathbb{R}^q &\mathcal{H}(y^*) &\colonequals \left\{ y \in \mathbb{R}^q \middle |\,   \varphi(y,y^*)\geq 0 \right\}
\end{align*}
and a duality mapping
\begin{equation*}
  \Psi \colon 2^{\mathbb{R}^q} \rightarrow 2^{\mathbb{R}^q}, \quad \Psi(F^*)\colonequals \bigcap_{y^* \in F^*} -\mathcal{H}(y^*) \cap \P\text{.}
\end{equation*}
A proper face $F^*$ of the lower image $\mathcal{D}^*$ is called {\em
  vertical} if $F^*=F^*-K$. Non-vertical proper faces of
$\mathcal{D}^*$ are also called {\em $K$-maximal} as they consist of
$K$-maximal points only.

\begin{theorem}[Geometric Duality \cite{geom_duality}]\label{geometric_duality}
  $\Psi$ is an inclusion reversing one-to-one map (i.e.\ $F_1^*
  \subseteq F_2^* \iff \Psi(F_1^*) \supseteq \Psi(F_2^*)$) between the
  set of all non-vertical proper faces $F^*$ of $\mathcal{D}^*$ and
  the set of all proper faces $F$ of $\P$.
  The inverse map is given by
  \begin{equation*}
    \Psi^{-1} \colon 2^{\mathbb{R}^q} \rightarrow 2^{\mathbb{R}^q}, \quad \Psi^*(F)\colonequals \bigcap_{y \in F}-\mathcal{H}^*(y) \cap \mathcal{D}^* \text{.}
  \end{equation*}
  For non-vertical proper faces $F^*$ of $\mathcal{D}^*$ one has
  \begin{equation*}
    \dim F^* + \dim \Psi(F^*) = q-1\text{.}
  \end{equation*}
\end{theorem}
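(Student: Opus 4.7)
The plan is to derive all three assertions from the weak and strong duality theorems (Theorems~\ref{weak_duality} and~\ref{strong_duality}), exploiting that the coupling function $\varphi$ is bi-affine and nearly symmetric in its two arguments. Since the result is classical \cite{geom_duality}, I would concentrate on the geometric mechanism rather than on every calculation.

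First I would verify that $\Psi$ and $\Psi^{-1}$ send faces to faces. For fixed $y^*\in\mathcal{D}^*$, weak duality yields $\mathcal{H}(y^*)\supseteq\P$, so $\{y\in\R^q:\varphi(y,y^*)=0\}$ supports $\P$ and $\P\cap-\mathcal{H}(y^*)$ is either empty or a proper face of $\P$. Since $\varphi$ is bi-affine, intersecting over $y^*\in F^*$ reduces to an intersection over a finite affine spanning set of $F^*$, so $\Psi(F^*)$ is a face of $\P$. Symmetrically $\Psi^{-1}(F)$ is a face of $\mathcal{D}^*$, and its non-verticality follows from the $-y_q^*$ term in $\varphi$: moving in the $+K$ direction strictly decreases $\varphi(y,\,\cdot\,)$, so $K$ cannot lie in the recession cone of $\Psi^{-1}(F)$. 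Inclusion reversal is then automatic from the elementary fact that enlarging the index set shrinks the intersection.

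For bijectivity I would apply strong duality. If $y^*\notin F^*$ for a non-vertical proper face $F^*$, a hyperplane separating $y^*$ from $F^*$ in $\mathcal{D}^*$ produces, via Theorem~\ref{strong_duality}, a point $y\in\Psi(F^*)$ with $\varphi(y,y^*)>0$; hence $y^*\notin\Psi^{-1}(\Psi(F^*))$, and together with the easy reverse inclusion this gives $\Psi^{-1}\circ\Psi=\mathrm{id}$. The identity $\Psi\circ\Psi^{-1}=\mathrm{id}$ on proper faces of $\P$ follows by an analogous argument, swapping the roles of $y$ and $y^*$.

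The main obstacle, as I see it, is the dimension formula $\dim F^*+\dim\Psi(F^*)=q-1$. The cleanest route is to regard $\Psi$ as an order-reversing bijection between two finite graded face lattices of common length $q-1$ (the $-1$ reflects the single normalization $c^\intercal w=1$ in $T$, which cuts the ambient dimension on the dual side by one; the non-vertical restriction has the analogous effect on the primal, since facets of $\P$ correspond to extreme directions carrying exactly one degree of normalization). An order-reversing isomorphism between graded lattices of common length automatically satisfies a rank-complementation identity, which yields the formula. The delicate technical point is verifying that both posets are indeed graded of the same length; this amounts to a rank calculation for the bi-affine form $\varphi$ with the constraint $c^\intercal w=1$, for which I would refer to \cite{geom_duality}.
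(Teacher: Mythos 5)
The paper does not prove this theorem: it is quoted verbatim from Heyde and L\"ohne \cite{geom_duality}, so there is no in-paper proof to compare your sketch against. Judged on its own merits, your outline of the easy parts is sound --- weak duality does show that each $-\mathcal{H}(y^*)\cap\P$ is an exposed proper face (properness because $\P$ is full-dimensional when $C$ is solid), finite intersections of exposed faces are exposed faces, and inclusion reversal is formal. (Minor slip: verticality is $F^*=F^*-K$, and it is the $-K$ direction, not $+K$, that lies in $\recc\mathcal{D}^*$; the correct observation is that moving along $-K$ strictly increases $\varphi(y,\cdot)$ and hence leaves $\{\varphi(y,\cdot)\le 0\}$, so $\Psi^{-1}(F)$ cannot absorb $-K$.)

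The genuine gap is in the bijectivity step and, downstream of it, the dimension formula. Theorem~\ref{strong_duality} as stated is only a membership criterion ($\varphi\ge 0$ against all of $\mathcal{D}^*$ certifies $y\in\P$, and symmetrically); it does not produce, from a hyperplane separating $y^*$ from $F^*$ inside $\mathcal{D}^*$, a point $y\in\Psi(F^*)$ with $\varphi(y,y^*)>0$. For that you need the substantially stronger \emph{realization} statement: every supporting hyperplane of $\P$ arises as $\{y\mid\varphi(y,y^*)=0\}$ for some $y^*\in\mathcal{D}^*$, and every non-vertical supporting hyperplane of $\mathcal{D}^*$ arises as $\{y^*\mid\varphi(y,y^*)=0\}$ for some $y\in\P$ (which in particular shows $\Psi(F^*)\neq\emptyset$, something your face-to-face argument also silently assumes). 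That realization is the actual content of geometric duality and is obtained from LP duality applied to the scalarizations \eqref{scl:p1}/\eqref{scl:d1} and \eqref{scl:p2}/\eqref{scl:d2} --- compare Propositions~\ref{prop_scal2} and~\ref{prop_dual_scal} --- not from the two quoted duality theorems alone. For the dimension formula, an order-reversing bijection does give rank complementation \emph{once} you know that both posets are graded of common length $q-1$ and that $\Psi$ sends covers to covers; but the non-vertical proper faces of $\mathcal{D}^*$ form only a subposet of its face lattice, so gradedness of that subposet is exactly the point at issue, and you explicitly defer it to \cite{geom_duality}. As a result the hardest third of the theorem is cited rather than proved.
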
\

In particular, vertices of $\mathcal{D}^*$ correspond to facets of
$\P$ and vertices of $\P$ correspond to non-vertical
facets of $\mathcal{D}^*$.  There is also a correspondence between
the vertical facets of $\mathcal{D}^*$ and extremal directions of
$\P$, see \cite[Section 4.6]{buch_andreas}.

\subsection{Dual variant of Benson's algorithm for VLP}

The dual variant of Benson's algorithm constructs the lower image $\mathcal{D}^*$ by a shrinking sequence of polyhedral outer approximations 
\begin{equation*}
	\O_0^* \supsetneq \O_1^* \supsetneq \dots \supsetneq \O^*_j \supsetneq \dots \supsetneq \O_k^* = \mathcal{D}^*\text{.}
\end{equation*}
This refinement procedure is analogous to the one described in the primal case.
An arbitrary vertex of $\O^*_j$ is either identified as
element of $\mathcal{D}^*$ or is cut off by intersecting
$\O^*_j$ with a suitable half-space $\mathcal{H}^*_j$
obtained from the solution of a scalar problem.
This results in the improved outer approximation $\O^*_{j+1}\colonequals \O^*_j \cap \mathcal{H}^*_j$. 

In order to give a counterpart to Proposition \ref{prop_scal2}, let
$t^*$ be a vertex of $\O_j^*$.  By \[C^+ \colonequals
\left\{y^*\in\R^q\,\middle\vert\,y\in C \Longrightarrow y^\intercal
  y^*\geqslant 0\right\}\] we denote the {\em positive dual} of \(C\).
We set
\begin{align*}
	&\omega(t^*)\colonequals \left( t^*_1,\ldots,t^*_{q-1},1-\sum_{i=1}^{q-1} c_i t^*_i\right)^\transpose
	&\text{and}&
	&\Delta \colonequals  \left\{\, y^* \in \mathbb{R}^q \middle |\,   \omega(y^*) \in C^+  \,\right\}\text{.}
\end{align*}

\begin{proposition}[{\cite[Proposition
    4.6]{benson_type}}] \label{prop_dual_scal}
  Let $S \neq \emptyset$, and let $C$ according to \eqref{def_C} be a solid,
  polyhedral convex pointed cone with $c \in \inter C$ and $c_q = 1$ that
  satisfies assumption~\eqref{assmpt:B}.  Consider $t^* \in \Delta$.
  For $w\colonequals \omega(t^*)$, there exists an optimal solution
  $x$ to \eqref{scl:p1}.  Each solution $x$ to \eqref{scl:p1}
  defines a half-space
  $\mathcal{H}^*(P x)\colonequals \left\{ y^* \in \mathbb{R}^q \, |\,
    \varphi\left( P x,y^* \right)\geq 0 \right\} \supseteq
  \mathcal{D}^*$ such that
  $s^*\colonequals \left( t_1^*,\ldots,t_{q-1}^*,w^\intercal Px
  \right)^\transpose \in -\mathcal{H}^* \cap \mathcal{D}^*$.
  Furthermore, one has $P x \in \bd \P$, and
  \begin{equation*}
    t^* \notin \mathcal{D}^* \iff w^\intercal Px < t_q^*\text{.}
  \end{equation*}
\end{proposition}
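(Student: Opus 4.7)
The plan is to verify each assertion using LP duality for the weighted-sum scalarization together with the explicit description of \(\mathcal{D}^*\) as \(D^*[T]-K\).

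First I would establish existence of an optimizer for \eqref{scl:p1}. The LP is feasible because \(S\neq\emptyset\); to rule out unboundedness below, note that \(w\colonequals\omega(t^*)\) lies in \(C^+\) by the hypothesis \(t^*\in\Delta\), while assumption \eqref{assmpt:B} gives \(\recc P[S]\subseteq C\). Hence \(w^\intercal Pd\geq 0\) for every \(d\in 0^+S\), so the objective is bounded below and attains its minimum at some \(x\in S\). By LP strong duality there is then a dual optimizer \(\bar u\geq 0\) with \(A^\intercal\bar u = P^\intercal w\) and \(b^\intercal\bar u = w^\intercal Px\).

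Next I would show \(s^*\in\mathcal{D}^*\) by verifying \((\bar u,w)\in T\). Non-negativity of \(\bar u\) and the equation \(A^\intercal\bar u = P^\intercal w\) are immediate; the normalization \(c^\intercal w = 1\) follows from \(c_q=1\) together with the explicit definition of \(\omega(t^*)\); and \(Y^\intercal w\geq 0\) is precisely \(w\in C^+\). By construction \(D^*(\bar u, w)=s^*\), so \(s^*\in\mathcal{D}^*\). The inclusion \(\mathcal{H}^*(Px)\supseteq\mathcal{D}^*\) is weak duality (Theorem~\ref{weak_duality}) applied at \(Px\in\P\). To see \(s^*\in -\mathcal{H}^*(Px)\), I would substitute into \eqref{coupling_func}: the first two groups of terms of \(\varphi(Px,s^*)\) collapse to \(w^\intercal Px\), which cancels \(s^*_q=w^\intercal Px\), giving \(\varphi(Px,s^*)=0\). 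Combined with \(\mathcal{H}(s^*)\supseteq\P\) from weak duality, this equality shows that \(Px\) lies on a hyperplane supporting \(\P\), whence \(Px\in\bd\P\).

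For the final equivalence, I would exploit the fact that once \(t^*_1,\dots,t^*_{q-1}\) are fixed, the constraints \(c^\intercal w=1\) and \(c_q=1\) pin down \(w=\omega(t^*)\) uniquely. Therefore \(t^*\in\mathcal{D}^*\) is equivalent to the existence of \(u\geq 0\) with \(A^\intercal u = P^\intercal\omega(t^*)\) and \(t^*_q\leq b^\intercal u\); the supremum of \(b^\intercal u\) over such \(u\) equals \(w^\intercal Px\) by LP strong duality for \eqref{scl:p1}/\eqref{scl:d1}, so \(t^*\in\mathcal{D}^*\iff t^*_q\leq w^\intercal Px\), whose contrapositive is the claimed equivalence. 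The main obstacle is careful bookkeeping: the hypothesis \(c_q=1\) has to enter in two distinct places --- the normalization \(c^\intercal\omega(t^*)=1\) and the cancellation producing \(\varphi(Px,s^*)=0\) --- while everything else reduces to LP weak and strong duality together with the definitions of \(T\), \(D^*\) and \(\varphi\).
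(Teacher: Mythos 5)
The paper states this proposition without proof, importing it verbatim from the cited reference, so there is no in-paper argument to compare against. Your proof is correct and is the standard one: boundedness of \eqref{scl:p1} via $\omega(t^*)\in C^+$ and $\recc P[S]\subseteq C$, membership $(\bar u,w)\in T$ giving $s^*=D^*(\bar u,w)\in\mathcal{D}^*$, the identity $\varphi(Px,s^*)=(Px)^\intercal\omega(t^*)-w^\intercal Px=0$ for the supporting-hyperplane and boundary claims, and LP strong duality for the final equivalence all check out, including the two places where $c_q=1$ is needed.
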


\begin{algorithm}[ht]
  \DontPrintSemicolon
  \SetKwFor{loop}{loop}{}{end}
  \KwIn{\\Data $A,\; b,\; P,\;Z,\; Y,\; c$
  }
  \KwOut{\\ V-representation ($\O^*\poi,\{-e^q\}$) of $\mathcal{D}^*$ \\H-representation $\O^*$ of $\mathcal{D}^*$}
  \Begin{		
    $\O^* \leftarrow \{ y^* \in \R^q \mid  y^\intercal \omega(y^*) \geq 0,\; y \in \{\text{columns of } Y\}\}$\; \label{alg3_delta}
    $T^* \leftarrow \emptyset$\; \label{alg3_init1}
    $w \leftarrow \text{sum of columns of $Z$}$\;\label{alg3_init2}
    $t^* \leftarrow (c^\intercal w)^{-1} w$\;\label{alg3_init3}
    $t^*_q \leftarrow \infty$\; \label{alg3_init4}
    \loop{}
    { 
      $w \leftarrow \omega(t^*)$\;	
      $x\leftarrow \texttt{solve}(\text{\eqref{scl:p1}})$\; \label{alg3_lp}
      \eIf{$t^*_q-w^\intercal Px > 0$}
      {
        $\O^* \leftarrow \O^* \cap \{y^* \in \R^q\mid  \varphi(Px,y^*)
        \geq 0\}$\;\label{alg3_cut}
        compute/update the set $\O^*\poi$ of vertices of $\O^*$\;
      }
      {
        $T^* \leftarrow T^* \cup \left\{ t^* \right\}$\;
      }			
      \eIf{$\O^*\poi \setminus T^* \neq \emptyset$\label{alg3_cond}}
      {choose $t^* \in \O^*\poi \setminus T^*$\; \label{alg3_vsr}}
      {break\;}							
    }		
  }
  \caption{\label{alg_dual_benson}%
    Dual variant of Benson's Algorithm, compare \cite[Algorithm
    2]{benson_type}}
\end{algorithm}

Algorithm \ref{alg_dual_benson} is a simplified and slightly modified
version of the dual variant of Benson's algorithm, compare
\cite[Algorithm 2]{benson_type}.  Note that all outer polyhedral
approximations $\O^*$ are contained in the set $\Delta$, compare line
\ref{alg3_delta} of Algorithm \ref{alg_dual_benson}. As the recession
cone of the sets $\O_j^*$ is always $-K$, their V-representations are
already specified by a finite set of points (rather than both points
and directions). Because of the modifications of the algorithm in
comparison to \cite{benson_type}, we sketch the proof of the following
theorem.

\begin{theorem}[{compare \cite[Theorem
    4.9]{benson_type}}]\label{th_alg3}
  Let $S\neq \emptyset$, let $C$ according to \eqref{def_C} be a
  polyhedral convex solid pointed cone with $c \in \inter C $ and $c_q=1$
  such that assumption \eqref{assmpt:B} is satisfied.  Then Algorithm
  \ref{alg_dual_benson} is correct and finite.
\end{theorem}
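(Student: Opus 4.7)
The plan is to follow the structure of \cite[Theorem 4.9]{benson_type}, verifying that the three modifications of Algorithm \ref{alg_dual_benson} relative to \cite[Algorithm 2]{benson_type} — direct storage of the H-representation of $\O^*$, omission of pre-image information, and retention of $T^*$ across cut iterations — do not break the loop invariants. The three invariants I intend to maintain are: (i) $\mathcal{D}^* \subseteq \O^*$; (ii) $T^* \subseteq \mathcal{D}^*$; and (iii) $\recc \O^* = -K$.

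Invariant (i) holds after initialization, since line \ref{alg3_delta} constructs $\O^* = \Delta$, and $\mathcal{D}^* \subseteq \Delta$ because every $(u,w) \in T$ satisfies $Y^\intercal w \geq 0$ and $c^\intercal w = 1$, so $\omega(D(u,w)) = w \in C^+$ (using $c_q = 1$). Subsequent cut steps preserve (i) by Proposition \ref{prop_dual_scal}, which supplies a half-space $\mathcal{H}^*(Px) \supseteq \mathcal{D}^*$. Invariant (ii) is preserved because $t^*$ enters $T^*$ only in the else branch, where the equivalence $t^* \notin \mathcal{D}^* \iff w^\intercal Px < t^*_q$ from Proposition \ref{prop_dual_scal} forces $t^* \in \mathcal{D}^*$. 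Invariant (iii) is preserved because every cutting half-space $\{y^* : \varphi(Px, y^*) \geq 0\}$ contains $-K$ in its recession cone.

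Correctness at termination then follows at once: the loop exits with $\O^*\poi \setminus T^* = \emptyset$, so by (ii) every vertex of $\O^*$ lies in $\mathcal{D}^*$; combined with (iii) and the fact that $\mathcal{D}^* - K = \mathcal{D}^*$, one obtains $\O^* = \conv \O^*\poi + (-K) \subseteq \mathcal{D}^*$, which together with (i) gives $\O^* = \mathcal{D}^*$. For finiteness, the argument of \cite[Theorem 4.9]{benson_type} carries over: any executed cut is strict because $w^\intercal Px < t^*_q$ strictly separates $t^*$ from the refined $\O^*$, and the number of such strict cuts is bounded by the number of facets of the polyhedron $\mathcal{D}^*$. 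The number of pass-through iterations between cuts is bounded by the vertex count of the terminal $\O^* = \mathcal{D}^*$, so the algorithm terminates in finitely many steps.

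The main obstacle I expect lies in justifying that retaining $T^*$ across cuts (rather than re-initializing it, as in \cite[Algorithm 2]{benson_type}) is safe. The key observation is that the certificate ``$t^* \in \mathcal{D}^*$'' recorded by placing $t^*$ in $T^*$ is permanent, independent of further refinements of $\O^*$. A vertex $t^* \in T^*$ may cease to be a vertex of the refined $\O^*$, but this is irrelevant because $T^*$ is consulted only via the set difference $\O^*\poi \setminus T^*$; any genuinely new vertex produced by a cut automatically lies outside $T^*$ and will be processed in due course. Thus the modification is a pure efficiency improvement, avoiding repeated resolution of \eqref{scl:p1} at previously verified dual vertices.
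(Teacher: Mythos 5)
Your correctness argument is essentially the paper's: the invariants $T^*\subseteq\mathcal{D}^*\subseteq\O^*$ plus the termination condition $\O^*\poi\subseteq T^*$ give $\O^*=\mathcal{D}^*$, and your observation that retaining $T^*$ across cuts is harmless matches the paper's treatment. One minor flaw there: your invariant (iii), $\recc\O^*=-K$, fails at initialization. The set $\O^*=\Delta$ from line \ref{alg3_delta} has $\recc\Delta=\R e^q$ (a line), because $\omega(y^*)$ does not depend on $y^*_q$ while $c\in\inter C$ forces $C^+\cap\{w\mid c^\intercal w=0\}=\{0\}$; hence $\Delta$ has no vertices at all. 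The algorithm copes with this by setting $t^*_q=\infty$, which forces a cut in the first iteration, and only after that cut (whose normal has last component $-1$) does the recession cone collapse to $-K$ so that $\O^*\poi\neq\emptyset$. Your justification of (iii) only shows that $-K\subseteq\recc\O^*$ is preserved; the paper makes the ``first cut creates a vertex'' point explicitly, and it is needed for the vertex enumeration and the termination test to be well defined.

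The substantive gap is in finiteness: the bound ``number of strict cuts $\leq$ number of facets of $\mathcal{D}^*$'' is not justified and is not true as stated. Each cut is the half-space $\mathcal{H}^*(Px)$ for some $Px\in\bd\P$ returned by \eqref{scl:p1}; it supports $\mathcal{D}^*$ in the face $-\mathcal{H}^*(Px)\cap\mathcal{D}^*$, which by geometric duality is a facet only when $Px$ is a vertex of $\P$. The LP solution need not project to a vertex of $\P$ (it may land in the relative interior of a higher-dimensional optimal face), so the supporting faces can have any dimension and there are infinitely many candidate supporting half-spaces. You correctly note that an executed cut strictly removes the current $t^*$ and therefore uses a half-space not previously added, but ``all cuts are distinct'' drawn from an infinite family does not yield termination. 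The paper's proof instead charges each cut to a \emph{face} (not facet) of $\mathcal{D}^*$ that newly enters the boundary of $\O^*$ and invokes the finiteness of the face lattice of $\mathcal{D}^*$; that accounting step is what your argument is missing. (Your bound on the pass-through iterations by the vertex count of $\mathcal{D}^*$ is fine: a point of $\mathcal{D}^*$ that is a vertex of $\O^*\supseteq\mathcal{D}^*$ is a vertex of $\mathcal{D}^*$, and elements of $T^*$ are never reprocessed.)
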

\begin{proof}
	Since $c \in \inter C$ and $w$ computed in line \ref{alg3_init2} belongs to $C^+ \setminus \{0\}$, we obtain that $c^\intercal w$ in line \ref{alg3_init3} is not zero. The linear program in line \ref{alg3_lp} always has a solution since $t^* \in \Delta$, which holds as the initial set $\O^*$ equals $\Delta$ by line \ref{alg3_delta}. The last component of the normal vector of the half-space $\{y^* \in \R^q\mid  \varphi(Px,y^*) \geq 0\}$ in line \ref{alg3_cut} is $-1$, see \eqref{coupling_func}. Thus, after the first cut in line \ref{alg3_cut} was made, $\O^*$ has a vertex.  It follows that $\O^*\poi$ is always nonempty.
 At termination, we have $\O^*\poi \subseteq T^*$. 
Since $T^* \subseteq \mathcal{D}^*$ and $\O^*\supseteq \mathcal{D}^*$, we obtain $\O^* = \mathcal{D}^*$. \par

To prove that the algorithm is finite, observe that $F^* \colonequals -\mathcal{H}^* \cap \mathcal{D}^*$ in Proposition \ref{prop_dual_scal} is a face of $\mathcal{D}^*$ which belongs to the boundary of $\O^*$ after the cut in line \ref{alg3_cut}.
A vertex $t^*$ of $\O^* \supseteq \mathcal{D}^*$ chosen in a subsequent iteration either belongs to $\mathcal{D}^*$ or cannot belong to the relative interior of $F^*$. In the first case $t^*$ is a vertex of $\mathcal{D}^*$ and is stored in $T^*$. In the second case, another face of $\mathcal{D^*}$ corresponds to the cut. 
Since $\mathcal{D}^*$ has only finitely many faces, the algorithm is finite.
\end{proof}

\subsection{Modified dual variant of Benson's algorithm to solve QCP}

Now the ideas from Section \ref{sec_prim} are applied to modify the dual variant of Bensons's algorithm in order to get a more efficient algorithm for \eqref{p}.
The main difference in comparison to the primal case is that a
shrinking sequence of outer polyhedral approximations $\O_j$
for $\P$ is not part of Algorithm \ref{alg_dual_benson}, but
values of $f$ at vertices of $\O_j$ are required in order to
be able to use the ideas of Section \ref{sec_mod_prim}.
Accepting the computational cost of an extra vertex enumeration step
per iteration allows us to calculate the required sequence of outer
polyhedral approximations $\O_j$ of $\P$.\par

Shao and Ehrgott \cite{linear_multiplicative}
developed a similar algorithm for the special case of multiplicative
linear programs, also see Section~\ref{sec_num}.\par
The main idea of Algorithm \ref{alg_dual_modification} can be
explained as follows: In the loop we compute both, shrinking sequences
of outer approximations $\O_j$ of $\P$ and $\O^*_j$ of $\mathcal{D}^*$. First, in the manner of Algorithm \ref{alg_primal_modification}, a vertex $t$ of $\O$ with minimal value $f(t)$ is selected. Thereafter, a vertex $t^*$ of $\O^*$ which has not yet been identified as a member of $\mathcal{D}^*$ is selected such that $\varphi(t,\cdot)$ is minimal. The difference to Algorithm \ref{alg_dual_benson} is that $t^*$ is selected in this special way. If $\varphi(t,t^*) \geq 0$ for a vertex $t$ of $\O$ with $f$ being minimal and for all vertices $t^*$ of $\O^*$, then $x \in S$ with $t=Px$ is an optimal solution for \eqref{p}. Thus, the selection rule for $t$ and $t^*$ can be motivated as the choice corresponding to the strongest violation of this optimality condition. 

\begin{algorithm}[ht]
	\DontPrintSemicolon 
	\SetKwFor{loop}{loop}{}{end}
	\KwIn{\\Data $A,\; b,\; Z,\;Y,\;c,\; f$, $\O$ according to assumption \eqref{assmpt:O}. }
	\KwOut{\\Solution $x$ of (QCP)}
	\Begin{		
		$\O^* \leftarrow \{ y^* \in \R^q \mid  y^\intercal \omega(y^*) \geq 0,\; y \in \{\text{columns of } Y\}\}$\;
		$T^* \leftarrow \emptyset$\;
		$w \leftarrow \text{sum of columns of $Z$}$\;
		$t^* \leftarrow (c^\intercal w)^{-1} w$\;
		$t^*_q \leftarrow \infty$\;		
		\loop{}
		{ 
			$w \leftarrow \omega(t^*)$\;	
			$x\leftarrow \texttt{solve}(\text{\eqref{scl:p1}})$\; \label{alg_4lp}
			\eIf{$t^*_q-w^\intercal Px > 0$}
			{
				$\O^* \leftarrow \O^* \cap \{y^* \in \R^q\mid  \varphi(Px,y^*)
				 \geq 0\}$\;\label{alg4_cut}
				compute/update the set $\O^*\poi$ of vertices of $\O^*$\;
			}
			{
				$T^* \leftarrow T^* \cup \left\{ t^* \right\}$\;
			}
			$\O \leftarrow \O \cap \{ y \in \R^q \mid  w^\intercal y \geq w^\intercal Px \}$\;	\label{alg4_add_start}
			compute/update the set $\O\poi$ of vertices of $\O$ \;
			choose $t \in \argmin \left\lbrace f(y) \mid  y \in \O\poi \right\rbrace$\; \label{alg4_argmin}\label{alg4_add_end}	
			\eIf{$\min \left\lbrace   \varphi(t,y^*) \mid   y^* \in \O^*\poi \right\rbrace < 0$\label{alg4_cond}}{ 
			choose $t^* \in \argmin \left\lbrace \varphi(t,y^*) \mid  y^* \in \O^*\poi \setminus T^*\right\rbrace$\; \label{alg4_vsr}}
			{determine $x \in S$ such that $t=Px$ and break\;\label{alg4_stop}}
		}		
	}
	\caption{\label{alg_dual_modification}%
          Modified dual variant of Benson's algorithms
	}
	
\end{algorithm}

\begin{theorem}
  Let $S \neq \emptyset$, $f \colon \R^q \rightarrow \extR$
  quasi-concave, and let
  $C= \left\{ y \in \mathbb{R}^q \,\middle\vert\, Z^{\transpose}y \geq
    0 \right\}$ be a polyhedral convex solid pointed cone which satisfies
  assumptions \eqref{assmpt:M} and \eqref{assmpt:B}.  Then Algorithm
  \ref{alg_dual_modification} is correct and finite.
\end{theorem}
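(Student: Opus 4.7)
I plan to adapt the proof of Theorem~\ref{th_alg3} for the dual Benson algorithm, integrating the additional primal outer approximation \(\O\) that Algorithm~\ref{alg_dual_modification} now maintains. The argument has three parts: (i) establish the invariants \(\O\supseteq\P\) and \(\O^*\supseteq\mathcal{D}^*\) throughout the loop; (ii) use strong duality at the break in line~\ref{alg4_stop}, followed by Proposition~\ref{prop_exist_minimal}, to conclude that the returned \(x\) solves \eqref{p}; and (iii) bound the total number of iterations by counting the faces of~\(\mathcal{D}^*\) exposed during execution.

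\textbf{Correctness.} The invariant \(\O^*\supseteq\mathcal{D}^*\) is inherited from Theorem~\ref{th_alg3}: initially \(\O^*=\Delta\supseteq\mathcal{D}^*\), and every cut \(\{\varphi(Px,\cdot)\geq 0\}\) contains \(\mathcal{D}^*\) by weak duality (Theorem~\ref{weak_duality}). For \(\O\supseteq\P\), the choice \(t^*\in\O^*\subseteq\Delta\) forces \(w=\omega(t^*)\in C^+\); combined with optimality of \(x\) for \eqref{scl:p1}, this yields \(w^{\transpose}Px\leq w^{\transpose}y\) for every \(y\in P[S]+C=\P\), so the cut in line~\ref{alg4_add_start} preserves the invariant. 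At the break in line~\ref{alg4_stop}, the stopping test gives \(\varphi(t,y^*)\geq 0\) for every vertex of \(\O^*\); since \(\varphi(t,\cdot)\) is affine in \(y^*\) with coefficient \(-1\) on its last coordinate (cf.~\eqref{coupling_func}), a standard recession-cone check shows its minimum on \(\O^*\) is attained at a vertex, so \(\varphi(t,\cdot)\geq 0\) on all of \(\O^*\supseteq\mathcal{D}^*\). Strong duality (Theorem~\ref{strong_duality}) then yields \(t\in\P\). Since \(t\) is a vertex of \(\O\supseteq\P\) lying in \(\P\), it is a vertex of \(\P\); pointedness of \(C\) together with \(\P=P[S]+C\) forces \(t=Px\) for some \(x\in S\). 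Applying Proposition~\ref{prop_exist_minimal} to the current \(\O\) gives \(f(Px)=f(t)\leq\inf_{x'\in S}f(Px')\), so \(x\) solves \eqref{p}.

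\textbf{Finiteness.} Each iteration executes either the ``if'' branch (cutting \(\O^*\)) or the ``else'' branch (appending \(t^*\) to \(T^*\)). Following Theorem~\ref{th_alg3}, each cut exposes the face \(F^*=-\mathcal{H}^*(Px)\cap\mathcal{D}^*\) of \(\mathcal{D}^*\) on the boundary of the updated \(\O^*\) via Proposition~\ref{prop_dual_scal}; since \(\mathcal{D}^*\) has only finitely many faces, only finitely many cuts occur. Between two consecutive cuts, \(\O^*\poi\) is fixed and finite, and every ``else'' iteration appends a fresh vertex of \(\O^*\) to \(T^*\subseteq\O^*\poi\); so the number of ``else'' iterations between two cuts is bounded as well. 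Combining these two bounds yields finiteness of the loop.

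\textbf{Main obstacle.} The subtlest point is the reconciliation of the two selection rules in lines~\ref{alg4_cond}--\ref{alg4_vsr}: the termination test minimizes \(\varphi(t,\cdot)\) over all of \(\O^*\poi\) while the new \(t^*\) is picked only from \(\O^*\poi\setminus T^*\). One must verify that \(\O^*\poi\setminus T^*\) is nonempty whenever the test fails, so that the algorithm can make further progress rather than deadlocking. Since \(T^*\subseteq\mathcal{D}^*\) by Proposition~\ref{prop_dual_scal}, the potentially problematic configuration \(\O^*\poi\subseteq T^*\) can only occur when \(\O^*=\mathcal{D}^*\); coupling this with the primal cuts in line~\ref{alg4_add_start} (which are made in lock-step with the dual cuts via the geometric duality of Theorem~\ref{geometric_duality}) ensures that \(\O\) is then also tight enough for \(t\in\P\), so the stopping test succeeds. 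Making this lock-step correspondence rigorous is the technical heart of the proof.
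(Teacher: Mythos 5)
Your proposal is correct and follows essentially the same route as the paper's proof: reduce to Theorem~\ref{th_alg3}, verify the invariants \(\O\supseteq\P\) and \(\O^*\supseteq\mathcal{D}^*\), combine strong duality with Proposition~\ref{prop_exist_minimal} at termination, and observe that the only way the vertex pool \(\O^*\poi\setminus T^*\) could be exhausted is \(\O^*=\mathcal{D}^*\), in which case the accumulated primal cuts force \(t\in\P\) and hence termination. The ``lock-step'' step you flag as the remaining technical heart is exactly what the paper carries out explicitly, via the inclusion chain \(\O\subseteq\{y\mid\forall t^*\in T^*\colon\omega(t^*)^\transpose y\geq t^*_q\}\subseteq\{y\mid\forall t^*\in\vertex\mathcal{D}^*\colon\varphi(y,t^*)\geq0\}=\P\), the last equality being geometric duality.
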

\begin{proof}
  Note that Algorithms \ref{alg_dual_modification} coincides with Algorithm \ref{alg_dual_benson} up to the following changes: 
  \begin{enumerate}[(a)]
  \item the additional lines
    \ref{alg4_add_start}--\ref{alg4_add_end} to compute
    the outer approximations $\O$ of $\P$,
  \item  a different stopping condition for the loop in line \ref{alg4_cond},
  \item a specific rule to select some $t^*$ from the set $\O^*\poi \setminus T^*$ in line \ref{alg4_vsr},
  \item the computation of the result $x$ in line \ref{alg4_stop}.
  \end{enumerate}  
  Therefore, the result follows from Theorem \ref{th_alg3} by taking into account the following facts:	
  \begin{enumerate}[(a)]
  \item The new lines
    \ref{alg4_add_start}--\ref{alg4_add_end} are well defined,
    in particular, by assumption \eqref{assmpt:O}, the set $\O$
    has a vertex.
    
  \item We show that the condition
    \begin{equation}\label{eq_cond4}
      \min \left\lbrace   \varphi(t,y^*) \mid   y^* \in \O^*\poi \right\rbrace < 0
    \end{equation}	
    in Algorithm \ref{alg_dual_modification}, line
    \ref{alg4_cond} implies the corresponding condition
    \begin{equation}\label{eq_cond3}
      \O^*\poi \setminus T^* \neq \emptyset
    \end{equation}
    in Algorithm \ref{alg_dual_benson}, line
    \ref{alg3_cond}. Assume that \eqref{eq_cond4} is satisfied
    but \eqref{eq_cond3} is violated, i.e.,
    $\O^*\poi \subseteq T^*$. Since
    $T^* \subseteq \mathcal{D}^*$ and
    $\O^*\supseteq \mathcal{D}^*$, we obtain
    $\O^* = \mathcal{D}^*$. By construction, we have
    \begin{equation*}
      \O \subseteq \{y \in \R^q\mid  \forall t^* \in T^*:\; \omega(t^*)^\transpose y \geq t^*_q\} \subseteq \{y \in \R^q\mid  \forall t^* \in  \text{vert}\,\mathcal{D}^*:\;\varphi(y,t^*)\geq 0\}  = \P\text{,}
    \end{equation*}
    where the last equation follows from geometric duality, see
    Theorem \ref{geometric_duality}.  On the other hand, we have
    $\O \supseteq \P$, whence $t \in \O= \P$. Weak duality
    (Theorem \ref{weak_duality}) implies that
    $\varphi(t,t^*)\geq 0$ for all $t^* \in \O^*\poi$, which
    contradicts \eqref{eq_cond4}. This proves that
    \eqref{eq_cond4} implies \eqref{eq_cond3}. Hence, from the
    finiteness of Algorithm \ref{alg_dual_benson} finiteness of
    Algorithm \ref{alg_dual_modification} follows.

  \item As shown in (b), the specific choice of $t^*$ in line
    \ref{alg4_vsr} is well-defined.

  \item At termination, we have $\varphi(t,y^*) \geq 0$ for all
    $y^* \in \O^*\poi$. As $\O^* = \conv \O^*\poi -K$, the
    inequality also holds for all
    $y^* \in \mathcal{D}^* \subseteq \O^*$. Theorem
    \ref{strong_duality} implies $t \in \P$. Since $t \in \P$ is
    a vertex of $\O$ and $\O\supseteq \P$, $t$ is also a vertex
    of $\P$.  Taking into account that $\P = P[S] + C$, we
    conclude that there exists $x \in S$ with $t =Px$. Since $t$
    in line \ref{alg4_argmin} was chosen from $\O\poi$ such that
    $f(t)$ is minimal, Proposition \ref{prop_exist_minimal}
    yields that $x$ is an optimal solution of \eqref{p}.
  \end{enumerate}
\end{proof}

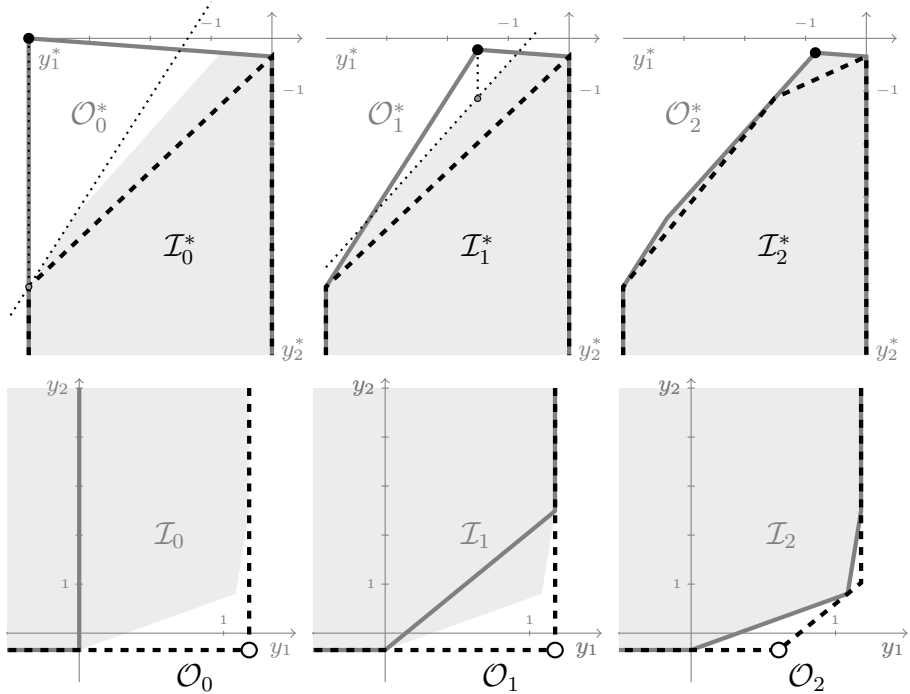
\begin{figure}[ht]
\center
\begin{tikzpicture}[xscale=0.8,yscale=0.7]
\path [fill,gray!15!white,ultra thin] (-4,-6) -- (-4,-4.707)--(-3.277,-3.406)--(-0.838,-0.272)--(0,-0.3444)--(0,-6)--(-4,-6);
\draw [->,gray] (0,-6) -- (0,0.5) ;
\draw [->,gray] (-4,0) -- (0.5,0) ;
\draw [gray](-0.05,-1)-- (0.05,-1);
\node [right,gray] at (0,-1) {\tiny $-1$};
\draw [gray] (-0.05,-2)-- (0.05,-2);
\draw [gray](-0.05,-3)-- (0.05,-3);
\draw [gray](-0.05,-4)-- (0.05,-4);
\draw [gray](-1,-0.05)-- (-1,0.05);
\node [above,gray] at (-1,0) {\tiny $-1$};
\draw [gray](-2,-0.05)-- (-2,0.05);
\draw [gray](-3,-0.05)-- (-3,0.05);
\node [below right,gray] at (0,-5.5) {\small $y_2^*$};
\node [below right,gray] at (-4,0) {\small $y_1^*$};
\node [gray] at (-3,-1.5) {\large $\O^*_0$};
\node [] at (-1.5,-4) {\large $\mathcal{I}^*_0$};
\draw [gray,ultra thick] (-4,-6) -- (-4,0)--(0,-0.344)--(0,-6);
\draw [ dashed,ultra thick] (-4,-6) -- (-4,-4.707)--(0,-0.344)--(0,-6);
\draw [dotted,thick] (-4,0)--(-4,-4.707);
\draw [dotted,thick] (-4.3,2.489+1.799*-4.3)--(-1,2.489+1.799*-1);
\draw [thick,fill=black] (-4,0) ellipse (2.1pt and 2.4pt);
\draw [fill=gray!80!white] (-4,-4.707) ellipse (1.4pt and 1.6pt);

\end{tikzpicture}
\begin{tikzpicture}[xscale=0.8,yscale=0.7]
\path [fill,gray!15!white,ultra thin] (-4,-6) -- (-4,-4.707)--(-3.277,-3.406)--(-0.838,-0.272)--(0,-0.3444)--(0,-6)--(-4,-6);
\draw [->,gray] (0,-6) -- (0,0.5) ;
\draw [->,gray] (-4,0) -- (0.5,0) ;
\draw [gray](-0.05,-1)-- (0.05,-1);
\node [right,gray] at (0,-1) {\tiny $-1$};
\draw [gray] (-0.05,-2)-- (0.05,-2);
\draw [gray](-0.05,-3)-- (0.05,-3);
\draw [gray](-0.05,-4)-- (0.05,-4);
\draw [gray](-1,-0.05)-- (-1,0.05);
\node [above,gray] at (-1,0) {\tiny $-1$};
\draw [gray](-2,-0.05)-- (-2,0.05);
\draw [gray](-3,-0.05)-- (-3,0.05);
\node [below right,gray] at (0,-5.5) {\small $y_2^*$};
\node [below right,gray] at (-4,0) {\small $y_1^*$};
\node [gray] at (-3,-1.5) {\large $\O^*_1$};
\node [] at (-1.5,-4) {\large $\mathcal{I}^*_1$};
.\draw [gray,ultra thick](-4,-6) -- (-4,-4.707)--(-1.504,-0.215)--(0,-0.344)--(0,-6);
\draw [ dashed,ultra thick] (-4,-6) -- (-4,-4.707)--(0,-0.344)--(0,-6);
\draw [dotted,thick] (-1.504,-0.215)--(-1.504,-1.14);
\draw [dotted,thick] (-4,0.815+1.288*-4)--(-0.5,0.815+1.288*-0.5);
\draw [thick,fill=black] (-1.504,-0.215) ellipse (2.1pt and 2.4pt);
\draw [fill=gray!80!white] (-1.504,-1.14) ellipse (1.4pt and 1.6pt);
\end{tikzpicture}
\begin{tikzpicture}[xscale=0.8,yscale=0.7]
\path [fill,gray!15!white,ultra thin] (-4,-6) -- (-4,-4.707)--(-3.277,-3.406)--(-0.838,-0.272)--(0,-0.3444)--(0,-6)--(-4,-6);
\draw [->,gray] (0,-6) -- (0,0.5) ;
\draw [->,gray] (-4,0) -- (0.5,0) ;
\draw [gray](-0.05,-1)-- (0.05,-1);
\node [right,gray] at (0,-1) {\tiny $-1$};
\draw [gray] (-0.05,-2)-- (0.05,-2);
\draw [gray](-0.05,-3)-- (0.05,-3);
\draw [gray](-0.05,-4)-- (0.05,-4);
\draw [gray](-1,-0.05)-- (-1,0.05);
\node [above,gray] at (-1,0) {\tiny $-1$};
\draw [gray](-2,-0.05)-- (-2,0.05);
\draw [gray](-3,-0.05)-- (-3,0.05);
\node [below right,gray] at (0,-5.5) {\small $y_2^*$};
\node [below right,gray] at (-4,0) {\small $y_1^*$};
\node [gray] at (-3,-1.5) {\large $\O^*_2$};
\node [] at (-1.5,-4) {\large $\mathcal{I}^*_2$};
\draw [gray,ultra thick] (-4,-6) -- (-4,-4.707)--(-3.277,-3.406)--(-0.838,-0.272)--(0,-0.3444)--(0,-6);
\draw [ dashed,ultra thick] (-4,-6) -- (-4,-4.707)--(-1.503,-1.127)--(0,-0.344)--(0,-6);
\draw [thick,fill=black] (-0.838,-0.272) ellipse (2.1pt and 2.4pt);
\end{tikzpicture}

\begin{tikzpicture}[xscale=1.9,yscale=0.65]
\path [fill,gray!15!white,dashed , ultra thick] (-0.5, -0.3444)--(0, -0.344)--(1.084, 0.804)--(1.177, 2.49)--(1.177, 5)--(-0.5,5);
\draw [->,gray] (0,-1) -- (0,5.2) ;
\draw [->,gray] (-0.5,0) -- (1.5,0) ;
\draw [gray](-0.025,1)-- (0.025,1);
\node [left,gray] at (0,1) {\tiny $1$};
\draw [gray](-0.025,2)-- (0.025,2);
\draw [gray](-0.025,3)-- (0.025,3);
\draw [gray](-0.025,4)-- (0.025,4);
\draw [gray](-0.025,5)-- (0.025,5);
\draw [gray](-0.5,-0.025)-- (-0.5,0.025);
\node [above,gray] at (1,0) {\tiny $1$};
\draw [gray](1,-0.05)-- (1,0.05);
\node [left,gray] at (0,5) {\small $y_2$};
\node [below,gray] at (1.4,0) {\small $y_1$};
\node [left,gray] at (0.8,2) {\large $\mathcal{I}_0$};
\node [left] at (1,-0.9) {\large $\O_0$};
\draw [gray,ultra thick] (-0.5, -0.3444)--(0, -0.344)--(0, 5);
\draw [ultra thick,dashed]  (1.177,5)--(1.177,-0.3444)--(-0.5,-0.3444);
\draw [fill=white,thick] (1.177,-0.3444) ellipse (1.5pt and 4.25pt);
\end{tikzpicture}
\begin{tikzpicture}[xscale=1.9,yscale=0.65]
\path [fill,gray!15!white,dashed , ultra thick] (-0.5, -0.3444)--(0, -0.344)--(1.084, 0.804)--(1.177, 2.49)--(1.177, 5)--(-0.5,5);
\draw [->,gray] (0,-1) -- (0,5.2) ;
\draw [->,gray] (-0.5,0) -- (1.5,0) ;
\draw [gray](-0.025,1)-- (0.025,1);
\node [left,gray] at (0,1) {\tiny $1$};
\draw [gray](-0.025,2)-- (0.025,2);
\draw [gray](-0.025,3)-- (0.025,3);
\draw [gray](-0.025,4)-- (0.025,4);
\draw [gray](-0.025,5)-- (0.025,5);
\draw [gray](-0.5,-0.025)-- (-0.5,0.025);
\node [above,gray] at (1,0) {\tiny $1$};
\draw [gray](1,-0.05)-- (1,0.05);
\node [left,gray] at (0,5) {\small $y_2$};
\node [below,gray] at (1.4,0) {\small $y_1$};
\node [left,gray] at (0,5) {\small $y_2$};
\node [below,gray] at (1.4,0) {\small $y_1$};
\node [left,gray] at (0.8,2) {\large $\mathcal{I}_1$};
\node [left] at (1,-0.9) {\large $\O_1$};
\draw [gray,ultra thick] (-0.5, -0.3444)--(0, -0.344)--(1.176,2.49)--(1.176, 5);
\draw [ultra thick,dashed]  (1.177,5)--(1.177,-0.3444)--(-0.5,-0.3444);
\draw [fill=white,thick] (1.177,-0.3444) ellipse (1.5pt and 4.25pt);
\end{tikzpicture}
\begin{tikzpicture}[xscale=1.9,yscale=0.65]
\path [fill,gray!15!white,dashed , ultra thick] (-0.5, -0.3444)--(0, -0.344)--(1.084, 0.804)--(1.177, 2.49)--(1.177, 5)--(-0.5,5);
\draw [->,gray] (0,-1) -- (0,5.2) ;
\draw [->,gray] (-0.5,0) -- (1.5,0) ;
\draw [gray](-0.025,1)-- (0.025,1);
\node [left,gray] at (0,1) {\tiny $1$};
\draw [gray](-0.025,2)-- (0.025,2);
\draw [gray](-0.025,3)-- (0.025,3);
\draw [gray](-0.025,4)-- (0.025,4);
\draw [gray](-0.025,5)-- (0.025,5);
\draw [gray](-0.5,-0.025)-- (-0.5,0.025);
\node [above,gray] at (1,0) {\tiny $1$};
\draw [gray](1,-0.05)-- (1,0.05);
\node [left,gray] at (0,5) {\small $y_2$};
\node [below,gray] at (1.4,0) {\small $y_1$};
\node [left,gray] at (0,5) {\small $y_2$};
\node [below,gray] at (1.4,0) {\small $y_1$};
\node [left,gray] at (0.8,2) {\large $\mathcal{I}_2$};
\node [left] at (1,-0.9) {\large $\O_2$};
\draw [gray , ultra thick] (-0.5, -0.3444)--(0, -0.344)--(1.084, 0.804)--(1.177, 2.49)--(1.177, 5);
\draw [ultra thick,dashed] (1.177,5) -- (1.177,1.027)--(0.607,-0.3444)--(-0.5,-0.3444);
\draw [thick,fill=white] (0.607,-0.3444) ellipse (1.5pt and 4.25pt);
\end{tikzpicture}

\caption{\label{depiction_ex_pa_dual}%
  Visualization of the initialization and the first two iteration
  steps of Algorithm \ref{alg_dual_modification} by Example
  \ref{example_pa_dual}.  The shrinking sequence $\O^*_j$ of outer
  approximations of $\mathcal{D}^*$ corresponds to the expanding
  sequence
  $\mathcal{I}_j \colonequals \{ y \in \R^q \mid \forall y^* \in
  \O^*_j:\; \varphi(y,y^*)\geq 0\}$ of inner approximations of $\P$ by
  geometric duality. Likewise, there is an expanding sequence
  $\mathcal{I}_j^* \colonequals \{ y^* \in \R^q \mid \forall y \in
  \O_j:\; \varphi(y,y^*)\geq 0\}$ of inner approximations of
  $\mathcal{D}^*$ corresponding to the shrinking sequence of outer
  approximations $\O_j$ of $\P$. The white circle indicates the vertex
  of $\O_j$ chosen as $t$ in line \ref{alg4_argmin}. The black dots
  label the corresponding points $t^*$, see line
  \ref{alg4_vsr}. Again, the gray dots indicate the calculated
  boundary points.  The calculations are based on the choice of
  $c=(-0.25 , 1)^\intercal $ as inner point of $C$. Notice that even
  though we have $\mathcal{D}^*=\O^*_2$, the algorithm does not
  terminate after two iterations because $t$ is not an element of
  $\P$. Another two iteration steps are required to identify the
  problems solution $(1.084,0.804)^\intercal$.}
\end{figure}

\begin{example} \label{example_pa_dual}
	Consider the problem stated in Example  \ref{example_pa}. The first steps of Algorithm \ref{alg_dual_modification} are shown in Figure \ref{depiction_ex_pa_dual}.
\end{example}

\section{Extension to the case of non-solid cones}  \label{nonsolid_cones}

The algorithms developed in Sections \ref{sec_prim} and \ref{sec_dual} are based on the assumption that the cone $C$ has a nonempty interior. Some $c \in \inter C$ is required in Proposition \ref{prop_scal2} and for the duality results in Section \ref{sec_geometric_duality}. 

As shown in \cite{poly_proj}, any vector linear program 
\begin{equation*}\tag{VLP}
\min\nolimits_C \;Px \quad \text{s.t.} \quad Ax \geq b \text{,}
\end{equation*}
for $C=\{y \in \R^q \mid  Z^{\transpose} y \geq 0\}$, can be reformulated as a vector linear program 
\begin{equation}\label{vlp1}
\min\nolimits_{\mathbb{R}^{q+1}_+} \begin{pmatrix}y\\-e^\intercal y \end{pmatrix} \quad \text{s.t.} \quad Z^{\transpose} y \geq Z ^{\transpose} Px,\,Ax \geq b \tag{VLP'}\text{.}
\end{equation}
We use \(e\) to denote the vector whose entries are all equal to one
of appropriate dimension.  Observe that the ordering cone used in
\eqref{vlp1} is the non-negative orthant $\mathbb{R}^{q+1}_+$.  The
relationship between the upper images $\P$ of \eqref{vlp} and
$\mathcal{M}$ of \eqref{vlp1} can be described as follows. Consider
the hyperplane
$H\colonequals \{y \in \R^{q+1} \mid e^{\transpose} y = 0\}$ and let
$\pi : \R^{q+1} \to \R^q$ be the projection defined by
$\pi(y_1,\dots,y_q,y_{q+1}) \colonequals (y_1,\dots,y_q)$
(i.e. cancellation of the last component). Then
\begin{equation*}
	 \P = \pi(\mathcal{M} \cap H)\text{.}
\end{equation*}
The next result shows that assumption \eqref{assmpt:B} is not satisfied for \eqref{vlp1} in most cases.
\begin{proposition}
	Let $C\neq \{0\}$. If Problem \eqref{vlp1} is feasible, then it is unbounded.
\end{proposition}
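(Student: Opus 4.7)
The plan is to exhibit, for every feasible instance of \eqref{vlp1} with $C\neq\{0\}$, an explicit recession direction of the image that does not belong to the ordering cone $\R^{q+1}_+$. That is precisely the failure of assumption~\eqref{assmpt:B} for \eqref{vlp1}, which is the meaning of ``unbounded'' in this VLP setting.

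First I would fix some $c\in C\setminus\{0\}$ (which exists by hypothesis) and pick any feasible pair $(\bar x,\bar y)$. The key observation is that for every $\lambda\geq 0$ the pair $(\bar x,\bar y+\lambda c)$ is again feasible: the constraint $A\bar x\geq b$ is untouched, while $Z^\transpose(\bar y+\lambda c)=Z^\transpose \bar y+\lambda Z^\transpose c\geq Z^\transpose P\bar x$ holds because $Z^\transpose c\geq 0$ by $c\in C$. Consequently the vector
\[
  d\colonequals \begin{pmatrix} c\\ -e^\intercal c\end{pmatrix}\in\R^{q+1}
\]
is a recession direction of the image of the feasible set under the linear objective $(x,y)\mapsto(y,-e^\intercal y)$, hence a recession direction of the upper image $\mathcal M$.

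Next I would verify $d\notin\R^{q+1}_+$ by a small case distinction on $c$. If $c$ has a strictly negative component, then already the first $q$ coordinates of $d$ leave the orthant. Otherwise $c\geq 0$ componentwise; but then $c\neq 0$ forces $e^\intercal c>0$, so the last coordinate $-e^\intercal c$ is strictly negative and again $d\notin\R^{q+1}_+$. Either way $d$ is a nontrivial recession direction of $\mathcal M$ outside the ordering cone, so $\recc\mathcal M\not\subseteq\R^{q+1}_+$ and \eqref{vlp1} is unbounded.

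There is really no hard step here; the only mildly subtle point is matching the definition of ``unbounded'' for a VLP (failure of $C$-boundedness of the image, cf.\ \eqref{assmpt:B}) with the construction, and keeping track that the sign flip introduced by the extra coordinate $-e^\intercal y$ is exactly what prevents any nonzero $c\in C$ from producing a recession direction inside $\R^{q+1}_+$.
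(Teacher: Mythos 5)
Your argument is correct and coincides with the paper's own proof: both exhibit the direction $\bigl(c,-e^\intercal c\bigr)^\transpose$ for a nonzero $c\in C$ as a recession direction of the upper image $\mathcal M$ lying outside $\R^{q+1}_+$. The only difference is that you spell out the small case distinction showing $\bigl(c,-e^\intercal c\bigr)^\transpose\notin\R^{q+1}_+$, which the paper dismisses as obvious.
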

\begin{proof}
Since $C \neq \{0\}$, there is a nonzero vector $c \in C$. 
Consider the vector 
\begin{equation*}
\bar{c}\colonequals \begin{pmatrix}
c \\ -e^\intercal c
\end{pmatrix} \in \R^{q+1}\text{.}
\end{equation*}
Let $\bar z$ be an arbitrary point in the upper image 
\begin{equation*}
	\mathcal{M} = \left\{ \begin{pmatrix}z \\ \zeta \end{pmatrix} \middle |\,   \exists y \in \P\colon z \geq y,\; \zeta \geq -e^\intercal y \right\}
\end{equation*}
of \eqref{vlp1} and $\lambda \geq 0$. Then $\bar z + \lambda \bar c \in \mathcal{M}$, i.e., $\bar c$ is a direction of $\mathcal{M}$. But obviously $\bar c$ does not belong to $\R^{q+1}_+$. Thus, $\eqref{vlp1}$ is unbounded.
\end{proof}
This problem can be solved by enlarging the ordering cone $\R^{q+1}_+$ appropriately. Let $Y \in \R^{q\times o}$ denote a matrix whose columns are generators of the cone $C$, that is, $C=\{ Y \mu \mid  \mu \geq 0 \}$. Set
\begin{equation*}
R \colonequals  \left\{ \left( I ,\begin{matrix} Y\\ -e^\intercal Y \end{matrix} \right) \begin{pmatrix}\lambda\\ \mu \end{pmatrix} \; \vert \; \lambda \in \mathbb{R}^{q+1}_+ ,\, \mu \in \mathbb{R}^{o}_+   \right\}
\end{equation*}
and consider the problem
\begin{equation}\label{vlp2}
\min\nolimits_R \begin{pmatrix} Px \\-e^\intercal Px \end{pmatrix} \quad \text{s.t.} \quad Ax \geq b \tag{VLP''}\text{.}
\end{equation}
Problems \eqref{vlp} and \eqref{vlp2} are related in the following sense.

\begin{proposition}
  Let $\P$ be the upper image of \eqref{vlp} and let $\mathcal{M}$ be the upper image of \eqref{vlp2}. Then
  \begin{equation*}
    \P = \pi(\mathcal{M} \cap H)\text{.}
  \end{equation*}
\end{proposition}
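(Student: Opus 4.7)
The plan is to unwind the definitions of the upper image $\mathcal{M}$, the hyperplane $H$, and the enlarged cone $R$, and then verify the equality $\mathcal{P} = \pi(\mathcal{M}\cap H)$ by two inclusions. A generic element of $R$ has the form $\lambda + (Y\mu, -e^\intercal Y\mu)^\intercal$ with $\lambda\in\R^{q+1}_+$ and $\mu\in\R^o_+$, so every point of $\mathcal{M}$ can be written as
\[
  (z,\zeta) = \bigl(Px + \lambda_1 + Y\mu,\; -e^\intercal Px + \lambda_2 - e^\intercal Y\mu\bigr)
\]
with $x\in S$, $\lambda_1\in\R^q_+$, $\lambda_2\in\R_+$, and $\mu\in\R^o_+$.

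The key observation is that intersecting with $H$ forces the nonnegative orthant contribution to vanish. Indeed, $(z,\zeta)\in H$ means $\zeta = -e^\intercal z$, and substituting the representations of $z$ and $\zeta$ above gives $\lambda_2 = -e^\intercal \lambda_1$. Since $\lambda_1\geq 0$ and $\lambda_2\geq 0$, this enforces $\lambda_1 = 0$ and $\lambda_2 = 0$. Hence every element of $\mathcal{M}\cap H$ has the form $(Px + Y\mu,\, -e^\intercal(Px+Y\mu))$ with $x\in S$ and $\mu\in\R^o_+$; after cancellation of the last component we obtain $\pi(\mathcal{M}\cap H)\subseteq P[S] + \{Y\mu: \mu\geq 0\} = P[S] + C = \mathcal{P}$.

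For the reverse inclusion, pick $y\in\mathcal{P}$, write $y = Px + Y\mu$ with $x\in S$ and $\mu\geq 0$, and consider $(y,-e^\intercal y)\in\R^{q+1}$. By construction this point lies in $H$, and it can be written as $(Px, -e^\intercal Px) + (Y\mu, -e^\intercal Y\mu)$ with the second summand in $R$ (take $\lambda = 0$ in the definition of $R$); hence it lies in $\mathcal{M}$. Applying $\pi$ recovers $y$, giving $\mathcal{P}\subseteq \pi(\mathcal{M}\cap H)$.

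No step appears to be a real obstacle; the only slightly subtle point is the sign/nonnegativity argument that forces $\lambda = 0$ on $\mathcal{M}\cap H$, which is what makes the construction work and which mirrors the analogous fact used in the previous proposition showing that \eqref{vlp1} (with ordering cone $\R^{q+1}_+$) is unbounded whenever $C\neq\{0\}$. The enlargement from $\R^{q+1}_+$ to $R$ adds precisely the directions $(Y\mu,-e^\intercal Y\mu)$ needed to reproduce $C$ inside the hyperplane $H$, while the orthant component is eliminated by the intersection with $H$.
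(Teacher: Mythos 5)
Your proof is correct and follows essentially the same route as the paper: both arguments reduce to the observation that on $H$ the nonnegative-orthant part of the cone $R$ must have zero coordinate sum and hence vanish, leaving exactly $P[S]+C$ after projection. The only cosmetic difference is that you introduce the slack $\lambda\in\R^{q+1}_+$ explicitly, whereas the paper keeps the componentwise inequality $z-Px\geq Y\mu$ together with $e^\intercal(z-Px)\leq e^\intercal Y\mu$ and concludes $z-Px=Y\mu$ directly.
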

\begin{proof}
  From the problem definitions we have 
  \begin{align*}
    \mathcal{M} &= \left\{\begin{pmatrix}
        z \\ \zeta\end{pmatrix}  \middle\vert\; \exists x \in S\colon\begin{pmatrix}
        z \\ \zeta
      \end{pmatrix} \geq_R \begin{pmatrix}
        Px \\ -e^\intercal Px
      \end{pmatrix} \right\} \\
                &= \left\{ \begin{pmatrix}
                    z \\ \zeta\end{pmatrix}  \middle\vert\;  \exists x \in S,\; \exists \mu \in \R^{o}_+\colon\begin{pmatrix}
                    z \\ \zeta
                  \end{pmatrix} 
    - \begin{pmatrix}
      Px \\ -e^\intercal Px
    \end{pmatrix} \geq \begin{pmatrix}
      Y \\ -e^\intercal Y 
    \end{pmatrix} \mu \right\}\text{.}	
  \end{align*}
  Thus, we have
  \begin{align*}
    \mathcal{M} \cap H 
    &= \left\{ \begin{pmatrix}
        z \\ -e^\intercal z \end{pmatrix}  \middle\vert\;  \exists x \in S,\; \exists \mu \in \R^{o}_+\colon\begin{pmatrix}
        z- Px \\ -e^\intercal (z-Px)
      \end{pmatrix} \geq \begin{pmatrix}
        Y \mu \\ -e^\intercal Y \mu 
      \end{pmatrix} \right\}\\
    &=
      \left\{ \begin{pmatrix}
          z \\ -e^\intercal z \end{pmatrix}  \middle\vert\;  \exists x \in S,\; \exists \mu \in \R^{o}_+\colon 
    z- Px = Y \mu \right\}\text{,}
  \end{align*}
  which implies the claim.
\end{proof}

Let Problem \eqref{p} as defined in Section \ref{sec_approach}, in
particular, let a quasi-concave function $f\colon\R^q \to \extR$, be given and let
$C \subseteq \R^q$ be a polyhedral convex pointed cone such that the
assumptions \eqref{assmpt:M} and \eqref{assmpt:B} are satisfied. We
define
\begin{equation*}
  \bar{f} \colon \R^{q+1} \to \R,\quad \bar{f}
  \left(
    \begin{pmatrix}
      y\\
      \eta
    \end{pmatrix}
  \right)\colonequals
  \begin{cases}
    f(y)& \text{if } e^\intercal y +\eta \geqslant 0\text{,}\\
    -\infty&\text{otherwise.}
  \end{cases}
\end{equation*}
As $f$ is quasi-concave on \(\R^q\), $\bar{f}$ is quasi-concave on
\(\R^{q+1}\).  Further, we define
\[
  \bar{P}\colonequals
  \begin{pmatrix}
    P\\
    -e^\intercal P
  \end{pmatrix}\text{.}
\]
The following quasi-concave problem is a reformulation
of \eqref{p} with the same optimal solution:
\begin{equation}\label{p1}\tag{QCP'}
  \min \bar{f}(\bar{P}x)\quad \text{s.t.} \quad A x \geq b\text{.}
\end{equation}
The associated vector linear program is \eqref{vlp2}.
\begin{proposition}
  Let \eqref{assmpt:M} be satisfied for \eqref{p} and cone \(C\).
  Then assumption \eqref{assmpt:M} does also hold for \eqref{p1} with
  respect to the cone \(R\), i.e.\ \(\bar{f}\) is \(R\)-monotone on
  the set \(\bar{P}[S] - R\).
\end{proposition}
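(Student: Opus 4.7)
The plan is to reduce $R$-monotonicity of $\bar{f}$ on $\bar{P}[S] - R$ to the $C$-monotonicity of $f$ on $P[S] - C$ provided by assumption \eqref{assmpt:M}. The bridge is the hyperplane $H = \{u \in \R^{q+1} \mid e^\intercal u = 0\}$: since $\bar{P}[S] \subseteq H$ and $\bar{f}$ equals $-\infty$ off $H$, every comparison that matters happens on $H$, and on $H$ the cone $R$ collapses to a lifted copy of $C$.

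Fix $u = (y,\eta)^\intercal$ and $v = (y',\eta')^\intercal$ in $\bar{P}[S] - R$ with $v - u \in R$; the goal is $\bar{f}(u) \leq \bar{f}(v)$. The first step I would record is the cancellation identity that governs everything: for any $r = \lambda + (Y\mu,-e^\intercal Y\mu)^\intercal \in R$ with $\lambda \in \R^{q+1}_+$ and $\mu \in \R^o_+$, the $Y\mu$-contributions to $e^\intercal r$ cancel by the very construction of $R$, leaving $e^\intercal r = e^\intercal \lambda \geq 0$. Combined with $e^\intercal \bar{P}x = 0$ this already yields $e^\intercal u \leq 0$ and $e^\intercal v \leq 0$.

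The case $\bar{f}(u) = -\infty$ is trivial, so I may assume $e^\intercal u \geq 0$, and hence $e^\intercal u = 0$. The only real step in the argument is the following: if $u = \bar{P}x - r$ with $r \in R$ and $e^\intercal u = 0$, then the resulting $e^\intercal r = 0$ together with $\lambda \geq 0$ forces $\lambda = 0$ in every representation of $r$, so $r = (Y\mu',-e^\intercal Y\mu')^\intercal$ for some $\mu' \in \R^o_+$. Projecting onto the first $q$ coordinates yields $y = Px - Y\mu' \in P[S] - C$ and $\bar{f}(u) = f(y)$. The same observation applied to $v$ — using that $e^\intercal v \geq e^\intercal u = 0$, hence $e^\intercal v = 0$ — gives $y' \in P[S] - C$ and $\bar{f}(v) = f(y')$. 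Applying the observation once more to $v - u \in R$, whose $e^\intercal$-value also vanishes, produces $v - u = (Y\mu,-e^\intercal Y\mu)^\intercal$ for some $\mu \geq 0$, so $y' - y = Y\mu \in C$, i.e.\ $y \leq_C y'$. Assumption \eqref{assmpt:M} then delivers $f(y) \leq f(y')$, which is $\bar{f}(u) \leq \bar{f}(v)$.

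There is no genuine obstacle beyond this threefold use of the cancellation identity; the rest is bookkeeping. The only notational care-point is that $e$ is the all-ones vector of $\R^q$ when acting on $Y\mu$ but of $\R^{q+1}$ when acting on elements of $R$ or on $u,v$; once that is kept straight the argument is immediate.
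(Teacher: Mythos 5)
Your proof is correct and follows essentially the same route as the paper's: both arguments rest on the observation that the $\R^{q+1}_+$-part of any $R$-decomposition contributes strictly negatively to the quantity $e^\intercal y+\eta$ controlling finiteness of $\bar f$, so that all points with finite value lie on the hyperplane $H$, where $R$ collapses to the lifted copy of $C$ and assumption (M) applies. The only difference is organizational — you track the invariant functional $e^\intercal$ while the paper writes out explicit decompositions with $\sigma^1,\sigma^2$ — which if anything handles the non-uniqueness of the representation a bit more cleanly.
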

\begin{proof}
  Consider \(z^1\in\bar{P}[S]-R\), i.e.
  \[
    z^1 =
    \begin{pmatrix}
      y - c^1\\
      -e^\intercal (y -c^1)
    \end{pmatrix}
	- \sigma^1
  \]
  for \(y\in P[S]\) and some \(\sigma^1\in\R^{q+1}_+, c^1\in C\). Furthermore,
  let \(z^2\in\R^{q+1}\) with \(z^2\leqslant_R z^1\), meaning
  \begin{align*}
    z^2 &= z^1 -
    \begin{pmatrix}
      c^2\\
      -e^\intercal c^2
    \end{pmatrix}
	-\sigma^2\\
	&=
	\begin{pmatrix}
		y - c^1 - c^2\\
		-e^\intercal\left(y - c^1 - c^2\right)
	\end{pmatrix}
	-\left(\sigma^1 + \sigma^2\right)
  \end{align*}
  for some \(\sigma^2\in\R^{q+1}_+,c^2\in C\), be given.  Then
  \[
  	\bar{f}(z^2) =
	\begin{cases}
		f(y - c^1 - c^2) & \text{if }\sigma^1 = \sigma^2 = 0\\
		-\infty &\text{otherwise}
	\end{cases}
  \]
  holds. In the case \(\sigma^j = 0\) for \(j=1,2\), due to condition
  \eqref{assmpt:M} for \eqref{p}, we get
  \[
  \bar{f}(z^2)=f(y - c^1 - c^2) \leqslant f(y - c^1) = \bar{f}(z^1)\text{,}
  \]
  which proves the claim.
\end{proof}

\begin{proposition}
	Let \eqref{assmpt:B} be satisfied for Problem \eqref{p} with cone \(C\).
Then \eqref{assmpt:B} does also hold for \eqref{p1}, i.e.\ \(\bar{P}[S]\) is
bounded with respect to \(R\).
\end{proposition}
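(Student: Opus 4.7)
The plan is to verify the inclusion $\recc\bar{P}[S] \subseteq R$ directly, by exploiting the generator description of $C$ and the definition of $R$. Since $\bar{P}[S]$ is obtained from the polyhedron $S$ by a linear map, everything stays polyhedral, and the task reduces to rewriting a generic recession direction in a form that matches the definition of $R$.

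First, I would characterise the recession cone of $\bar{P}[S]$. Because $S$ is polyhedral and $\bar{P}$ is linear, $\bar{P}[S]$ is a polyhedron and one has $\recc\bar{P}[S] = \bar{P}[\recc S]$ (a standard Minkowski--Weyl argument). Hence every $\bar{d}\in\recc\bar{P}[S]$ takes the form $\bar{d} = (Pd,\,-e^\intercal Pd)^\intercal$ for some $d\in\recc S = 0^+S$.

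Next, I would invoke assumption \eqref{assmpt:B} for \eqref{p}: since $d \in \recc S$, we have $Pd \in \recc P[S] \subseteq C$. Using the generator representation $C = \{Y\mu \mid \mu\in\R^o_+\}$, pick $\mu\geqslant 0$ with $Pd = Y\mu$. Substituting,
\[
  \bar{d} = \begin{pmatrix} Pd \\ -e^\intercal Pd\end{pmatrix} = \begin{pmatrix} Y\mu \\ -e^\intercal Y\mu\end{pmatrix} = \begin{pmatrix} Y \\ -e^\intercal Y\end{pmatrix}\mu,
\]
which, with the choice $\lambda = 0\in\R^{q+1}_+$, is precisely of the form prescribed by the definition of $R$. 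Therefore $\bar{d}\in R$, giving $\recc\bar{P}[S]\subseteq R$ as required.

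The only non-mechanical step is the identity $\recc\bar{P}[S] = \bar{P}[\recc S]$; the rest is direct substitution. If one prefers to avoid invoking this polyhedral fact, the same conclusion can be reached by fixing $x_0\in S$, taking $x_k\in S$ with $\bar{P}(x_k) = \bar{P}(x_0) + k\bar{d}$, normalising the differences $x_k - x_0$ to extract a recession direction $d\in 0^+S$ with $\bar{P}(d)$ proportional to $\bar{d}$, and then completing the argument as above via \eqref{assmpt:B} for \eqref{p}.
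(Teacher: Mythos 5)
Your proof is correct and follows essentially the same route as the paper: both arguments reduce $\recc\bar{P}[S]$ to vectors of the form $(Pd,\,-e^\intercal Pd)^\intercal$ with $Pd\in\recc P[S]\subseteq C$ and then observe that such vectors lie in $R$ by construction. You merely spell out the two steps the paper leaves implicit (the identity $\recc\bar{P}[S]=\bar{P}[\recc S]$ and the inclusion via the generator matrix $Y$ with $\lambda=0$), which is fine.
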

\begin{proof}
Let $P[S]$ be $C$-bounded, i.e.\ it holds $0^+P[S] \subseteq C$. The claimed  statement immediately follows from
\begin{align*}
0^+\bar{P}[S] = \begin{pmatrix}
0^+P[S] \\ -e^\intercal \left( 0^+P[S] \right)
\end{pmatrix} \subseteq \begin{pmatrix}
C \\ -e^\intercal C
\end{pmatrix} \subseteq R \, .
\end{align*}  
\end{proof}

Let us summarize the results.
\begin{corollary}
	The assumption $\inter C \neq \emptyset$ can be dropped, when Algorithm \ref{alg_primal_modification} or Algorithm \ref{alg_dual_modification} is applied to the reformulated quasi-concave problem \eqref{p1}.
\end{corollary}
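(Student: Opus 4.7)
The plan is to verify $0^+ \bar{P}[S] \subseteq R$ by a short direct computation that exploits the block structure of $\bar{P}$ and $R$ together with the hypothesis $0^+ P[S] \subseteq C$.

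First I would use the standard fact that, for a polyhedron $S$ and any linear map $L$, one has $0^+ L[S] = L[0^+ S]$. Applied to $P$ and to $\bar{P}$, this gives $0^+ P[S] = P[0^+ S]$ and $0^+ \bar{P}[S] = \bar{P}[0^+ S]$. Since $\bar{P}$ has the stated block form, every element of $\bar{P}[0^+ S]$ equals $\begin{pmatrix} Px\\ -e^\intercal Px \end{pmatrix}$ for some $x\in 0^+ S$, hence equals $\begin{pmatrix} v\\ -e^\intercal v \end{pmatrix}$ with $v \in P[0^+ S] = 0^+ P[S]$.

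Next I would invoke assumption \eqref{assmpt:B} for \eqref{p} to conclude $v \in C$, and then use the generating representation $C = \{Y\mu : \mu \geq 0\}$ to write $v = Y\mu$ with $\mu \geq 0$. This yields
\begin{equation*}
  \begin{pmatrix} v\\ -e^\intercal v \end{pmatrix} = \begin{pmatrix} Y\\ -e^\intercal Y \end{pmatrix}\mu\text{,}
\end{equation*}
which lies in $R$ upon choosing the $\lambda$-component equal to $0 \in \R^{q+1}_+$ in the definition of $R$. Hence $0^+ \bar{P}[S]\subseteq R$, which is exactly assumption \eqref{assmpt:B} for \eqref{p1}.

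There is no real obstacle here: the argument is pure bookkeeping of the block structure of $\bar{P}$, $R$, and the generators of $C$. The only ingredient beyond the definitions is the commutation of the recession cone with a linear image, which is a standard fact for polyhedra.
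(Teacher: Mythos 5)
Your computation showing $\recc\bar{P}[S]\subseteq R$ is correct and is essentially the paper's own argument for the corresponding proposition (the paper writes $\recc\bar{P}[S]=\bigl(\begin{smallmatrix}\recc P[S]\\ -e^\intercal(\recc P[S])\end{smallmatrix}\bigr)\subseteq\bigl(\begin{smallmatrix}C\\ -e^\intercal C\end{smallmatrix}\bigr)\subseteq R$; you spell out the same inclusion via the generators $Y\mu$ and the choice $\lambda=0$). But this establishes only assumption \eqref{assmpt:B} for \eqref{p1}, which is one of several ingredients the corollary needs, so as a proof of the corollary itself the proposal has a genuine gap.

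Concretely, to conclude that Algorithm \ref{alg_primal_modification} or \ref{alg_dual_modification} may be applied to \eqref{p1} you must also verify: (i) assumption \eqref{assmpt:M}, i.e.\ that $\bar{f}$ is $R$-monotone on $\bar{P}[S]-R$ --- this is the content of the first proposition of Section \ref{nonsolid_cones} and is not pure bookkeeping, since one has to decompose $z^2\leq_R z^1$ into a $C$-part and an $\R^{q+1}_+$-part and use that $\bar{f}$ takes the value $-\infty$ as soon as the $\R^{q+1}_+$-components are nonzero (this is exactly where the extra coordinate $-e^\intercal Px$ earns its keep); (ii) that $R$ is a \emph{solid} pointed polyhedral cone --- solidity is immediate from $R\supseteq\R^{q+1}_+$ and is the entire point of the construction, while pointedness requires a short argument using $e^\intercal\bigl(\begin{smallmatrix}Y\\ -e^\intercal Y\end{smallmatrix}\bigr)\mu=0$ together with the pointedness of $C$; and (iii) that an optimal solution of \eqref{p1} is an optimal solution of \eqref{p}, which holds because $\bar{f}(\bar{P}x)=f(Px)$ for every $x$. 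Without (i)--(iii) the boundedness inclusion alone does not justify dropping the assumption $\inter C\neq\emptyset$.
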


For illustration reasons we close this section with an example.

\begin{example} \label{ex_nonsolid}
Consider the concave problem 
\begin{equation*}
\min \; y_1 -y_2^2 \quad
\text{s.t.}\quad  x\in S , \; y=Px \text{.}
\end{equation*} 
for a matrix $P \in \mathbb{R}^{2 \times n}$. Without any information
about the structure of $P[S]$ and due to the quadratic impact of
$y_2$, the largest polyhedral monotonicity cone usable is $C=(1,
0)^\transpose  \cdot \mathbb{R}_+$. This cone is obviously non-solid in
$\mathbb{R}^2$. To illustrate the method discussed above we
consider the problem
\begin{equation*}
\min \; y_1 -y_2^2 \quad\text{s.t.}\quad  -e \leq x,\; x_1\leq 1,\; x_3 \leq 1, \; y=Px\text{,} 
\end{equation*}
where we set 
\begin{align*}
	P=\begin{pmatrix} 1 & 1 &-1 \\ 1 & 0 & 1\end{pmatrix}\text{.}
\end{align*}
The upper images $\P$ of \eqref{vlp} and $\mathcal{M}$ of \eqref{vlp2} are depicted in Figure \ref{image_quad_ex}. Obviously both $(1,-1,1)^\transpose$ and $(-1,-1,-1)^\transpose$ solve the given problem with optimal value $-5$. Notice the solid recession cone of $\mathcal{M}$.

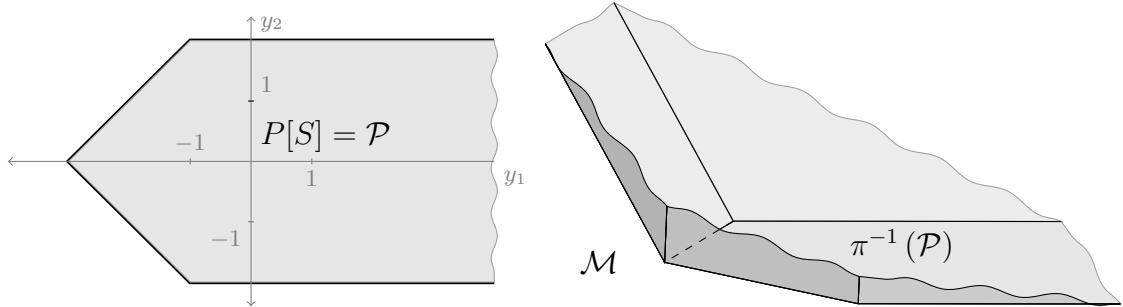
\begin{figure}[ht]
\begin{tikzpicture}[scale=0.8]
\draw [ultra thick] (4,-2)--(-1,-2)--(-3,0) -- (-1,2) -- (4,2);
\draw [gray!75!white,fill=gray!20!white] (-1,-2)--(-3,0)--(-3,0) -- (-1,2)--(4,2)decorate[decoration={snake,amplitude=.4mm,segment length=6mm}]{--(4,-2)}--cycle;
\draw [<-,gray] (-4,0) -- (4,0) ;
\draw [<->,gray] (0,-2.4) -- (0,2.4) ;
\draw [gray](-0.05,-1)-- (0.05,-1);
\node [gray,below left] at (0,-1) {\small $-1$};
\draw [](-0.05,1)-- (0.05,1);
\node [gray,above right] at (0,1) {\small $1$};
\draw [gray](-1,-0.05)-- (-1,0.05);
\node [gray,above] at (-1,0) {\small $-1$};
\draw [gray](1,-0.05)-- (1,0.05);
\node [below,gray] at (1,0) {\small $1$};
\node [above right,gray] at (0,2) {\small $y_2$};
\node [gray,below right] at (4,0) {\small $y_1$};
\node [above right] at (0,0) {\large $P[S]=\P$};
\end{tikzpicture}
\begin{tikzpicture}[scale=0.9,rotate around x=310, rotate around y=15, rotate around z=-320]

\coordinate (P3) at (-1,-2,3);
\coordinate (P2) at (-3 ,0,3);
\coordinate (P1) at (-1,2,-1);

\coordinate (D1) at (0,1,0);
\coordinate (D2) at (0,0,1);
\coordinate (D3) at (1,0,-1);

\coordinate (CA) at ($(P1)+5*(D3)$);
\coordinate (CB) at ($(P3)+4*(D3)$);
\coordinate (CC) at ($(P3)+0.8*(D2)$);
\coordinate (CD) at ($(P2)+4*(D1)$);
\coordinate (CE) at ($(P2)+1.65*(D2)$);
\coordinate (CF) at ($(P1)+4*(D1)$);

\def\amplitude{.5};
\def\snlen{10};

\draw[gray!75!white,thin,fill=gray!15!white] (P2)--(P1)--(CF)decorate[decoration={snake,amplitude=\amplitude mm,segment length=\snlen mm}]{--(CD)}--cycle;
\draw (CD)--(P2)--(P1)--(CF);
\draw[gray!75!white,thin,fill=gray!15!white] (P1)--(CA)decorate[decoration={snake,amplitude= \amplitude mm,segment length=\snlen mm}]{--(CF)}--cycle;
\draw (CF)--(P1)--(CA);
\draw[gray!75!white,thin,fill=gray!20!white] (P1)--(P2)--(P3)--(CB) decorate[decoration={snake,amplitude=\amplitude mm,segment length= \snlen mm}]{--(CA)}--cycle;
\draw(CA)--(P1)--(P2)--(P3)--(CB);
\draw[fill=gray!40!white, opacity=.5] (P3)--(CB)decorate[decoration={snake,amplitude=\amplitude mm,segment length=\snlen mm}]{--(CC)}--cycle;
\draw (CC)--(P3)--(CB);
\draw[fill=gray!50!white, opacity=.5] (P3)--(P2)--(CE)decorate[decoration={snake,amplitude=\amplitude mm,segment length=\snlen mm}]{--(CC)}--cycle;
\draw (CC)--(P3)--(P2)--(CE);
\draw[fill=gray!60!white, opacity=.5] (P2)--(CE)decorate[decoration={snake,amplitude=\amplitude mm,segment length=\snlen mm}]{--(CD)}--cycle;
\draw (CD)--(P2)--(CE);
\draw[dashed] (P1)--(P2);
\node at (1,0.8,-1.8) {\large $\pi^{-1}\left(\P\right)$};
\node at (-4,0,4) {\large $\mathcal{M}$};
\end{tikzpicture}

\caption{\label{image_quad_ex}%
  Image $P[S]$ and upper image $\P$ of \eqref{vlp} and upper image
  $\mathcal{M}$ of \eqref{vlp2} for Example~\ref{ex_nonsolid}. Notice
  the facet in $\mathcal{M}$ corresponding to $\P$.}
\end{figure}
\end{example}

\section{Numerical results} \label{sec_num}

The present section contains various numerical examples.  The package
{\em \bt} \cite{bt} for \octave\ / Matlab contains an implementation of the algorithms developed in this article.
The test problems in this section are solved using \bt\ with \octave\
on a computer with Intel\textregistered\ Core\texttrademark\ i7-6700HQ
CPU with 2.6 GHz.  For \bt\ we use the default tolerance for numerical
inaccuracies of $10^{-7}$.  We compare these results to the running
times achieved with \baron\ \cite{baron_2}, the general purpose solver
for mixed-integer nonlinear optimization problems.  The convergence
tolerance of \baron\ is also set to $10^{-7}$.

\subsection{Linear multiplicative programs}

Shao and Ehrgott \cite{linear_multiplicative} treat the problem class
of linear multiplicative programs. For
$c_i,l,u\in \R^n, b \in \R^m, d_i \in \R$ and $A \in \R^{m \times n}$,
they consider the problem
\begin{equation}\label{eq:lmp}\tag{LMP}
  \min \prod_{i=1}^q (c_i^\intercal x +d_i)  \quad\text{s.t.}\quad
  \left\{
    \begin{aligned}
      b &\leq Ax\text{,}\\
      l &\leq x \leq u\text{.}
    \end{aligned}
  \right.
\end{equation}
The parameters \(c_i,d_i\) of the objective function and the
constraint set in \eqref{eq:lmp} is chosen in such a way that
$c_i^\intercal x +d_i >0$ holds for all feasible points. In the following
example we generate random instances in the same way as Shao and Ehrgott
\cite{linear_multiplicative}.
\begin{example} \label{ex:lmp}
Let $A$ consist of equally distributed random real numbers out of the
interval $[0,10]$. The vectors $c_i$ and $b$ are generated in the same
way. The variable bounds are set to $l_j=0$ and $u_j=100$. For the
sake of simplicity, we set $d_i=0$.  We can now transform \eqref{eq:lmp} to an equivalent
problem of type \eqref{p}:
\[
  \min f(Px) \quad  \text{s.t.}\quad
  \left\{
    \begin{aligned}
      b &\leq Ax\text{,}\\
      l &\leq x \leq u\text{,}
    \end{aligned}
  \right.
\]
where the rows of $P$ consist of $c_i^\intercal$ from \eqref{eq:lmp}
and the objective \(f\) is defined as
\[
  f(y) =
  \begin{cases}
    \prod\limits_{i=1}^q y_i& \text{if \(y \in \R^q_+\),}\\
    -\infty& \text{else.}
  \end{cases}
\]
Then \(f\) is a quasi-concave function being $\R^q_+$-monotone on the
whole space $\R^q$.
\end{example}

The dual algorithm introduced in
\cite{linear_multiplicative} is similar to ours, but a different
vertex selection rule is used: While in line \ref{alg4_vsr} of
Algorithm \ref{alg_dual_modification} we determine a vertex $t^*$ of
$\O^*\setminus T^*$ such that $\varphi(t,t^*)$ is minimal, in
\cite[Algorithm 3.17, step (k1)]{linear_multiplicative} an arbitrary
vertex $t^*$ of $\O^*$ with $\varphi(t,t^*)<0$ is chosen. In order to
compare the different vertex selection rules, we also implement
Algorithm~\ref{alg_dual_modification} with the vertex selection rule
from \cite{linear_multiplicative}.  This modification is denoted by
Algorithm~\ref{alg_dual_modification}*.  In \autoref{table_mpp} we compare the running times of Algorithms
\ref{alg_primal_modification}, \ref{alg_dual_modification} and
\ref{alg_dual_modification}* to the times \baron\  needs to solve the
problem instances of Example~\ref{ex:lmp}.  For reference, we also
include the average running times for this problem class as reported
in \cite{linear_multiplicative}, where a personal computer with 2.5GHz
CPU and 4GB RAM is used for the computations.\par
\newcolumntype{C}{>{$}c<{$}}
\begin{table}[h]
\centering
\begin{tabular}{rCrrrrrr}
\toprule 
\multicolumn{6}{c}{}& \multicolumn{2}{c}{Shao/Ehrgott \cite{linear_multiplicative}}\\
$q$ & $(m,n)$ & \multicolumn{1}{c}{Alg.~\ref{alg_primal_modification}}& \multicolumn{1}{c}{{Alg.~\ref{alg_dual_modification}}} &\multicolumn{1}{c}{Alg.~\ref{alg_dual_modification}$^*$}& \multicolumn{1}{r}{BARON} & \multicolumn{1}{r}{primal} & \multicolumn{1}{r}{dual}\\
  \midrule
  \multirow{3}{*}{2}& (20,30)  &0.01 (7) &     0.01 (7) &     0.01 (7) & 0.18  &  0.10 & 0.11   \\
                    & (50,30)  &0.01 (7) &     0.01 (7) &     0.01 (7) & 0.28 & 0.14 & 0.11 \\
                    & (100,60) & 0.01 (8) &     0.01 (9) &     0.01 (9) & 1.09  & 0.20 & 0.15 \\
  \midrule
  \multirow{3}{*}{3} & (50,30) & 0.01 (18) &     0.01 (25) &     0.01 (32) & 0.50  & 0.34 & 0.29  \\
                     & (60,40) & 0.01 (18) &     0.01 (23) &     0.01 (32)  & 0.87 & 0.35 & 0.32 \\
                     & (100,60) & 0.01 (20) &     0.02 (24) &     0.02 (34) & 2.00  & 0.68 & 0.58   \\
  \midrule
  \multirow{2}{*}{4} & (60,40) & 0.02 (32) &     0.03 (51) &     0.04 (88) & -  & 2.09 &2.26  \\
                     & (100,60) &  0.03 (34) &     0.05 (56) &     0.08 (106) & -  & 7.98 & 7.94 \\
  \midrule
  5 & (100,60) & 0.07 (59) &     0.22 (101) &     0.69 (222) & -  & 24.17 & 29.38 \\ \hline
  \multirow{2}{*}{6}& (100,60) & 0.51 (87) &     5.17 (161) &     29.55 (410) & -  & 243.34 & 259.46  \\
                    & (150,80) & 0.82 (98) &     6.73 (183) &     64.83 (554)  &- &- & -\\
  \midrule
  \multirow{2}{*}{7} & (100,60) &19.89 (129) &     347.30 (263) &     -  &- &- & -\\
                    & (150,80) & 40.16 (164) &     384.23 (314) &     - & - &- & -\\
  \bottomrule

\end{tabular}
\caption{\label{table_mpp}%
  Average running time in seconds and number of iterations in
  parentheses for ten randomly generated instances of Example
  \ref{ex:lmp} of \autoref{alg_primal_modification} and
  \autoref{alg_dual_modification}.  \autoref{alg_dual_modification}*
  is a modified version of \autoref{alg_dual_modification} utilizing
  Shao and Ehrgott's vertex selection rule.  The two columns
  \textit{primal} and \textit{dual} contain the average running times
  of the respective algorithm taken from
  \cite{linear_multiplicative}.  The fourth column lists the
  average running times achieved by global optimization solver
  \baron. A `-' indicates that at least one test instance of the
  respective size was not solved within 600 seconds.
}
\end{table}  
The dual vertex \(t^*\) chosen in \autoref{alg_dual_modification},
line~\ref{alg4_vsr} may generate a half-space in the following
iteration step which contains the currently selected primal vertex
\(t\) (line~\ref{alg4_add_end}).  In this case the algorithm fails to
cut off the vertex \(t\) in line~\ref{alg4_cut}, and thus fails to
improve the current lower bound.  We call occurences of this case a
{\em failed cut}.  In \autoref{table_fail_cuts} we compare the number
of such failed cuts generated by
\autoref{alg_dual_modification} and the modification of this
algorithm with the vertex selection rule of
\cite{linear_multiplicative} (\autoref{alg_dual_modification}*).\par
\begin{table}[h]
\centering
\begin{tabular}{lrrrrr}
\toprule 
$q$ & 2 & 3& 4 & 5 & 6\\
\midrule
Alg. \ref{alg_dual_modification} & 0 & 1 & 5 & 13 & 21\\
Alg. \ref{alg_dual_modification}$^*$ & 1 & 8 & 50 & 133 & 294\\
\bottomrule
\end{tabular}
\caption{\label{table_fail_cuts}%
  Average number of failed cuts for Algorithm
  \ref{alg_dual_modification} and Algorithm
  \ref{alg_dual_modification}$^*$ for Example \ref{ex:lmp}. The data was
  generated by averaging the results of 10 different instances with
  100 constraints and 60 variables for each value of $q$.
}
\end{table}

\subsection{Concave quadratic programs}
Let $M\in \R^{n \times n}$ be a positive semi-definite
symmetric matrix with \(M = P^\transpose P\) for some matrix
\(P\in\R^{q\times n}\).  The problem
\begin{equation}\label{eq:cqp}\tag{CQP}
  \min_{x\in S} -x^\intercal Mx\text{,}
\end{equation}
where \(S\subseteq\R^n\) is a polytope, is a {\em concave quadratic
  optimization problem}.  Problem \eqref{eq:cqp} can be transformed to
\eqref{p} by using the concave objective function
\(f\colon\R^q\to \R\) defined by \(f(y)= -y^\intercal y\).  We obtain
\[
  -x^\intercal Mx = f(Px)\text{.}
\]
Monotonicity holds for the trivial cone $C= \{\,0\,\}$. Hence,
Algorithms \ref{alg_primal_modification} and
\ref{alg_dual_modification} can be applied to solve \eqref{eq:cqp}
using the techniques discussed in Section \ref{nonsolid_cones}.
\begin{example}[{\cite[Example 23]{dc_paper}}]\label{ex:cqp}
  For $q,n \in \mathbb{N}$, let $P \in \mathbb{R}^{q \times n}$ be defined as
  \begin{equation*}
    P_{ij}=\left\lfloor q \cdot \sin \left( \left( j-1 \right)\cdot q +i \right) \right\rfloor\text{,}
  \end{equation*}
  where
  $\lfloor x\rfloor \colonequals \max\{z \in \mathbb{Z}\mid z \leq
  x\}$.  Then $M \colonequals P^\transpose P$ is a positive
  semi-definite symmetric matrix.  We solve \eqref{p} with
  \(f(y)=y^\intercal y\), matrix \(P\) as defined above and feasible
  region
  $S=\left\{ x \in \mathbb{R}^n \,\middle\vert\, -e \leq x \leq e
  \right\}$.  We compare our results to the ones achieved by the
  non-convex problem solver \baron\ , see \cite{baron}, and to the
  results of the approach taken in \cite{dc_paper}. The numerical
  results are listed in Table \ref{table_quad}.
  \begin{table}[ht]
    \center
    \begin{tabular}{crrrrr}
      \toprule 
      $q$ &  $n$ & \text{Alg.~\ref{alg_primal_modification}}& \text{Alg.~\ref{alg_dual_modification}} & \text{\baron} & \text{DC(\cite{dc_paper})}\\
      \midrule
      \multirow{3}{*}{2} & 200 & 0.01 & 0.01 & 0.33 & 0.08\\
          & 1000 & 0.04 & 0.03 & 6.08 & 0.49 \\
          & 5000 & 0.85 & 0.67 & 567.96 & 10.48 \\ 
      \midrule
      \multirow{3}{*}{3} & 200 & 0.03 & 0.02 & 0.56 & 0.13\\
          & 1000 & 0.14 & 0.06 & 21.16 & 1.16 \\
          & 5000 & 2.98 & 1.53 & - & 25.40 \\
      \midrule
      \multirow{3}{*}{4} & 200 & 0.05 & 0.04 & 1.88 & -\\
          & 1000 & 0.42 & 0.16 & 43.84 & -\\
          & 5000 & 8.40 & 3.15 & - & -\\
      \midrule
      \multirow{5}{*}{5} & 10 & 0.02 & 0.05 & 0.22 & 0.06\\
          & 50 & 0.38 & 0.24 & 1.43 & - \\
          & 200 & 0.05 & 0.22 & 6.76 & - \\
          & 1000 & 0.47 & 0.50 & 109.06 & -\\
          & 5000 & 10.41 & 3.99 & - & - \\
      \midrule
      \multirow{5}{*}{6} & 10 & 0.83 & 0.55 & 0.24 & 0.09\\
          & 50 & 18.32 & 3.92 & 0.46 & - \\
          & 200 & 1.06 & 16.89 & 68.19 & -\\
          & 1000 & 2.58 & 20.50 & - & - \\
          & 5000 & 22.4 & 28.64 & - & -\\
      \bottomrule
    \end{tabular}
    \caption{\label{table_quad}%
      Numerical results for the concave quadratic program of Example
      \ref{ex:cqp}. Running times are given in seconds. Again, no number
      given in the column corresponding to \baron\  indicates the exceeding
      of 600 seconds given. The last column contains results of
      \cite{dc_paper} obtained by a DC-programming reformulation and using
      an unmodified MOLP solver.}
  \end{table}  

\end{example}

\subsection{DC-programs}
Recall the problem class of DC-programs with one polyhedral component introduced in Examples \ref{ex01} and \ref{ex02} in the introduction, which were shown to be special cases of \eqref{p}.

\begin{example} \label{dc_example}
We want to solve the following problem from \cite{Ferrer2015} and discussed in \cite{dc_paper}:
\begin{equation}\label{eq:exdc}
  \min_{x \in S} g(x)-h(x) \text{,}
\end{equation}
where
\begin{align*}
  &g(x)=\lvert x_1 -1 \rvert +200 \sum_{i=2}^q \max \left\{ 0,  \lvert
    x_{i-1} \rvert-x_i \right\}
  &\text{and}&
  &h(x)=100 \sum_{i=2}^q  \left( \lvert x_{i-1} \rvert -x_i \right)\text{.}
\end{align*}
The feasible region is
$S=\left\{ x \in \mathbb{R}^q \,\middle\vert\, -10 \cdot e \leq x \leq
  10 \cdot e \right\}$.  Both of the given functions are
polyhedral. Hence, by following the procedure in the introducing section
(Examples \ref{ex01} and \ref{ex02}) we obtain the two equivalent
problems
\begin{align*}
\min r -h(x) \quad &\text{s.t.\ \((x,r) \in \epi \hat{g}\)}
\intertext{and}
\min r^* -\hat{g}^*(x^*) \quad &\text{s.t.\ \((x^*,r^*) \in \epi h^*\),}
\end{align*}
where we set
\[
  \hat{g}(x) \colonequals
  \begin{cases}
    g(x)& \text{if \(x \in S\),}\\
    +\infty& \text{else.}
  \end{cases}
\]
We can now solve our initial problem by solving one of the two
problems above.  They both have a polyhedral feasible region and
concave objective functions.  The objectives of both problems are
monotone with respect to the cone \(C =
\left\lbrace(0,\ldots,0,t)^\transpose\in\R^{q +
    1}\,\middle\vert\,t\geq 0\right\rbrace\).  As \(\inter C = \emptyset\),
this problem is solved by using the extension discussed in Section
\ref{nonsolid_cones}.  The optimal value of \eqref{eq:exdc} is \(0\),
and a solution is given by \(e\in\R^q\).  In \autoref{table_dc} we list
numerical results for \autoref{alg_primal_modification} compared to
the ones obtained in \cite{dc_paper} and \cite{Ferrer2015}. \baron\ 
solves any instance of this problem in $0.01$ seconds.  This is
probably due to the simple structure of the solution.
\end{example}
\begin{table}[ht]
\center
\begin{tabular}{rrrrrrr}
\toprule 
p & \text{DCECAM  \cite{Ferrer2015}} & \text{DCPA \cite{Ferrer2015}} & \text{DC \cite{dc_paper}} & \text{DC$^*$ \cite{dc_paper}} & \text{QCP} & \text{QCP}$^*$\\
\midrule
2 &0.21&0.22&     0.05 & 0.05        &  0.01 & 0.01\\
3 &3.57&4.63&     0.06 & 0.06        &  0.01 & 0.02\\
4 &2.47&0.78&     0.17 & 0.08        &  0.01 & 0.03\\
5 &345.12&502.29& 3.68 & 0.09        & 0.30  & 0.06\\
6 &-&-&           375.03 & 0.13      & 12.34  & 0.11\\
7 &-&-&          28003.06      & 0.21       & 350.88 & 0.23\\
8 &-&-&           -      & 0.80     &  -   & 0.47\\
9 &-&-&           -      & 7.95     &-      & 0.99\\
\bottomrule

\end{tabular}
\caption{\label{table_dc}%
  Running time in seconds for Example \ref{dc_example}. The first two
  columns are results of \cite{Ferrer2005} obtained by the DC extended
  cutting angle method (DCECAM) and the DC prismatic algorithm
  (DCPA). It should be pointed out that these two methods do not
  require one of the two objective functions to be polyhedral. Thus,
  they are capable of solving more general problems than we do in this
  article. The next two columns, DC and DC$^*$, are results of
  \cite{dc_paper} obtained by using an unmodified MOLP solver. The
  last two columns, QCP and QCP$^*$, are results obtained by the
  extension of Algorithm 2 using the cone
  $\R_+ \cdot \left( 0,\ldots,0,1 \right)^\intercal$. DC and QCP are
  based on the primal approach in Example \ref{ex01}. DC$^*$ and
  QCP$^*$ are based on the dual approach outlined in Example
  \ref{ex02}.}
\end{table}

\subsection{Minimizing a convex function over the boundary of a polytope} \label{subsec:bd}

Example \ref{ex03} in the introduction motivates the class \eqref{p}
by the problem to minimize a Lipschitz continuous convex function $g$
over the boundary of a polytope $Q$.  To this end, the optimization
problem \eqref{p_ex03_a} is reformulated as the DC optimization
problem \eqref{p_ex03_b}, which depends on a sufficiently large
constant $c>0$. Let $L>0$ be the Lipschitz constant of $g$ as a
function defined on $Q$, that is,
\begin{equation}\label{eq_lip}
  \forall x,y \in Q\colon g(x)-g(y) \leq L \lVert x-y\rVert\text{,}
\end{equation}
where $\lVert\cdot\rVert$ denotes the Euclidean norm. The next
statement provides a proper choice of the parameter $c$ in dependence
of $L$.
\begin{proposition} \label{prop_cc}
  Let $Q=\left\{ x \in \R^q \,\middle\vert\,
    \exists u \in \R^k\colon A x + B u \geq b\right\}$ be a polytope with
  $0 \in \inter Q$ and let $g\colon \R^q \to \R\cup\{\,+\infty\,\}$ be
  a convex function such that \eqref{eq_lip} holds. Let \(R\in\R\) with
  \begin{equation*}
    R \geq \max\left\{\lVert x\rVert \,\middle\vert\, x \in Q\right\}\text{.}
  \end{equation*} 	
  When the parameter $c$ in Problem \eqref{p_ex03_b} is chosen such
  that $c > L R$, then \eqref{p_ex03_a} and \eqref{p_ex03_b} have the
  same set of optimal solutions and the same optimal value.
\end{proposition}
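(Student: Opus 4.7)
The plan is to show that no interior point of $Q$ can be a minimizer of \eqref{p_ex03_b}, from which the equality of optimal sets and values will follow, since on $\bd Q$ the two problems share the same objective. I would begin by extracting the geometric content of $h_c$: because $\epi h_c$ equals the conic hull of $Q\times\{c\}$ and $0\in\inter Q$, one has $h_c(x)=c\cdot \gamma_Q(x)$ for every $x\in\R^q$, where $\gamma_Q(x)\colonequals \inf\{\lambda\geq 0 \mid x\in\lambda Q\}$ denotes the Minkowski gauge of $Q$. Hence $h(x)=h_c(x)-c=c(\gamma_Q(x)-1)$, which vanishes precisely on $\bd Q$, so the objectives of \eqref{p_ex03_a} and \eqref{p_ex03_b} agree on $\bd Q$.

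Next, for an interior point $x\in\inter Q$ with $x\neq 0$ I would compare its objective value in \eqref{p_ex03_b} to that of its radial projection $x'\colonequals x/\gamma_Q(x)\in\bd Q$. A direct computation using $\gamma_Q(x)\in(0,1)$ gives $\|x-x'\|=\|x'\|(1-\gamma_Q(x))$. Combining this with the Lipschitz estimate $g(x)\geq g(x')-L\|x-x'\|$ and $\|x'\|\leq R$ yields
\[
g(x)-h(x) = g(x)+c(1-\gamma_Q(x))\geq g(x')+(1-\gamma_Q(x))\bigl(c-L\|x'\|\bigr)\geq g(x')+(1-\gamma_Q(x))(c-LR)\text{.}
\]
Since $c-LR>0$ and $1-\gamma_Q(x)>0$, the resulting value is strictly larger than $g(x')$, hence strictly larger than the optimal value of \eqref{p_ex03_a}.

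The corner case $x=0$ must be handled separately, because the radial projection is undefined there. For any $x^*\in\bd Q$ the Lipschitz bound gives $g(0)-h(0)=g(0)+c\geq g(x^*)-L\|x^*\|+c\geq g(x^*)+(c-LR)>g(x^*)$, so the origin is also strictly suboptimal for \eqref{p_ex03_b}.

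Combining these three cases, every minimizer of \eqref{p_ex03_b} must lie in $\bd Q$, where the two objectives coincide; this forces the optimal sets and optimal values of \eqref{p_ex03_a} and \eqref{p_ex03_b} to agree. The only genuinely delicate step is the identification $h_c=c\cdot\gamma_Q$ together with the geometric computation $\|x-x'\|=\|x'\|(1-\gamma_Q(x))$; everything else amounts to a routine application of the Lipschitz hypothesis and the strict bound $c>LR$.
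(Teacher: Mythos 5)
Your proof is correct, and it differs from the paper's in execution though not in its underlying mechanism. Both arguments rest on the positive homogeneity of $h_c$ along rays through the origin: $h_c$ grows radially at rate at least $c/R$ while $g$ grows at rate at most $L$, so outward radial movement strictly improves the penalized objective when $c>LR$. The paper implements this as a proof by contradiction: it assumes an optimal solution $x^*$ of \eqref{p_ex03_b} lies in $\inter Q$, compares $x^*$ with a scaled point $\mu x^*\in Q$, $\mu>1$, and plays the Lipschitz upper bound $g(\mu x^*)-g(x^*)\le L(\mu-1)\lVert x^*\rVert$ against the homogeneity lower bound $h_c(\mu x^*)-h_c(x^*)\ge \tfrac{c}{R}(\mu-1)\lVert x^*\rVert$ to force $LR\ge c$. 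You instead make the identity $h_c=c\,\gamma_Q$ (with $\gamma_Q$ the Minkowski gauge of $Q$) explicit and compare each interior point directly with its radial projection onto $\bd Q$, arriving at the quantitative estimate $g(x)-h(x)\ge g(x')+(1-\gamma_Q(x))(c-LR)$. Your version buys a direct argument that does not presuppose attainment of the minimum of \eqref{p_ex03_b} and exhibits exactly how much an interior point loses; the paper's version avoids introducing the gauge and the computation $\lVert x-x'\rVert=\lVert x'\rVert(1-\gamma_Q(x))$, needing only a one-line local estimate. Your handling of the corner case $x=0$ mirrors the paper's. Both proofs are complete.
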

\begin{proof}
  First note that the objective functions of \eqref{p_ex03_a} and
  \eqref{p_ex03_b} coincide on the boundary of $Q$. Hence, it suffices
  to show that every optimal solution of \eqref{p_ex03_b} belongs to
  the boundary of $Q$. Assume that an optimal solution $x^*$ of
  \eqref{p_ex03_b} belongs to the interior of $Q$. We start with the
  case where $x^* \neq 0$. There exists $\mu > 1$ such that
  $\mu x^* \in Q$ and we have
\begin{equation*}
	g(\mu x^*) - (h_c(\mu x^*) - c) \geq  g(x^*) - (h_c(x^*)-c)\text{.}	
\end{equation*}	
Thus
\begin{equation*}
	L (\mu-1) \lVert x^* \rVert \geq g(\mu x^*) - g(x^*) \geq h_c(\mu x^*) - h_c(x^*) \geq \frac{c}{R} (\mu-1) \lVert x^* \rVert\text{,}
\end{equation*}
where the latter inequality follows from the fact that the epigraph of $h_c$ is the cone generated by the set $Q \times \{c\}$. Hence $L R \geq c$, which contradicts the assumption $LR < c$. The case $x^*=0$ can be shown likewise by replacing $\mu x^*$ by some arbitrary $x \in Q \setminus \{0\}$.	
\end{proof}

\begin{example}\label{ex28}
  Let $q,m$ be positive integers with $q\leq m$ and let
  $P \in \mathbb{R}^{q \times m}$ be the matrix described in Example
  \ref{ex:cqp}. Furthermore let
  $S=\left\{ u \in \mathbb{R}^m \, | \, -e \leq u \leq e \right\}$. We
  intend to solve the problem
  \begin{equation}\label{bd_problem}
    \min_{x \in \bd P[S]} x^\intercal x\text{.}
  \end{equation}
  Note that the polyhedron $Q\colonequals P[S]$ is given by a P-representation
  \begin{equation*}
    Q=\left\{ x \in \mathbb{R}^p \,\vert\, \exists u \in \R^m\colon u \in S , x=Pu  \right\}\text{.}
  \end{equation*}
  Following the procedure described in Example \ref{ex03} we obtain a
  DC optimization problem as considered in Example \ref{ex02}. We set
  \begin{equation*}
    g(x)=
    \begin{cases}
      x^\intercal x& \text{if \(x \in Q\),} \\
      \infty& \text{otherwise.}
    \end{cases}
  \end{equation*}
  Thus, the conjugate in \eqref{eq_tsd} is obtained by solving the
  quadratic convex program
  \begin{equation}\label{eq_qp}
    -g^*(y) = \min_{x \in Q} \{\,x^\intercal x - y^\intercal x\,\}\text{.}
  \end{equation}
  The definition of the polyhedral convex function \(h\) in
  Example~\ref{ex03} requires the parameter \(c\) being sufficiently
  large.  Let $r$ be the vector of row sums of absolute values in $P$.
  We choose \(c\), according to Proposition~\ref{prop_cc} with
  \(R = \lVert r\rVert\) and Lipschitz-constant
  \(L = 2\lVert r\rVert\) of \(g\), as
  $c\colonequals 2 \lVert r \lVert^2+1$.  A representation of $h^*$,
  as needed in \eqref{eq_tsd}, is obtained as described in
  \cite[Proposition 6]{dc_paper}.\par
  Numerical results of \autoref{alg_primal_modification} applied to
  this problem are listed in Table \ref{table_bd}.  Problem
  \eqref{bd_problem} cannot be solved by \baron\  in the way described
  above, as \baron\ requires explicitly expressed algebraic functions, see
  \cite{baron}.
\end{example}

\begin{table}[ht]
\center
\begin{tabular}{rrrrrr}
\toprule 
 & \multicolumn{5}{C}{q} \\
 \cmidrule[0.2pt](lr){2-6}
m & 1 & 2& 3 & 4 & 5 \\
\midrule
10 & 0.10   & 0.48 & 0.67 & 2.21 & 44.27\\
20 & 0.15   & 0.84 & 1.08 & 5.84 & 26.71\\
50 & 0.72   & 1.87 & 4.64 & 41.19 & 159.97\\
100 & 3.84  & 11.48 & 20.17 & 152.62 & 2316.05 \\
200 & 25.35 & 77.98 & 93.80 & 1504.82 & 33089.42\\
\bottomrule
\end{tabular}
\caption{\label{table_bd}%
  Running time in seconds for Example \ref{ex28} using Algorithm
  \ref{alg_primal_modification} and Octaves's sqp solver for solving
  \eqref{eq_qp}.}
\end{table}

\section{Conclusion}
The contribution of this article can be summarized as follows:

We generalize the approach of Mittal and Schulz \cite{mittal_schulz} with respect to
the following three aspects: First, the objective function is not supposed to have a
certain scaling property at the price of loosing polynomial running time.
Secondly, our approach is based on Benson-type algorithms for MOLPs instead
of using grid-based scalarization parameters. Thirdly, we allow polyhedral
ordering cones $C$ which are more general than $\mathbb{R}^q_+$ in
order to weaken the monotonicity assumption to the objective function.
In particular, in Section \ref{nonsolid_cones} we even allow
the cone $C=\{0\}$, which means that no monotonicity assumption
is required. We present a technique that allows to treat the case of
$\inter C = \emptyset$ even though the VLP solver requires
an ordering cone $C$ with nonempty interior.

The results of Shao and Ehrgott \cite{linear_multiplicative} for
multiplicative linear programs (compare Example~\ref{ex04}) are generalized
to the class \eqref{p}. Moreover, we suggest an improvement of
the dual algorithm introduced in \cite{linear_multiplicative},
which consists of a vertex selection rule based on the strongest
violation of an optimality condition.

The results of \cite{dc_paper}, where a MOLP solver without any
modification was used to solve the problem classes of
Examples~\ref{ex01} and \ref{ex02} are generalized and improved,
since the approach we introduced requires less iteration
steps, in general. Numerical examples show that our approach via (a modified)
VLP solver is competitive with the global optimization
software \baron\  \cite{baron}.


\end{document}